\newtheorem{Pocz}{Poczatek}[section]
\newtheorem{prop}[Pocz]{Proposition}
\newtheorem{Theorem}[Pocz]{Theorem}
\newtheorem{Corollary}[Pocz]{Corollary}
\newtheorem{Lemma}[Pocz]{Lemma}
\newtheorem{Remark}[Pocz]{Remark}
\newtheorem{Notation}[Pocz]{Notation}
\newtheorem{Question}[Pocz]{Question}
\newtheorem{Example}[Pocz]{Example}
\newtheorem{Definition}{Definition}
\newcommand{\UU}{\mathcal{U}}
\newcommand{\VV}{\mathcal{V}}
\newcommand{\WW}{\mathcal{W}}
\DeclareMathOperator*{\st}{st}
\begin{document}
\begin{figure}
\centering
{\large Bounded Scale Measure and Property A} 

\vspace{0.2 cm}

Kevin Sinclair and Logan Higginbotham
\date{ \today}
\end{figure}
\begin{abstract}
We introduce a generalization for bounded geometry that we call bounded scale measure. We show that bounded scale measure is a coarse invariant unlike bounded geometry. We then show equivalent definitions for spaces with bounded scale measure and show other properties that spaces with bounded scale measure satisfy. From there, we generalize property A from uniformly discrete metric spaces to large scale spaces with bounded geometry. Lastly, we construct a definition for property A for large scale spaces with bounded scale measure and show that this definition of property A is a coarse invariant.
\end{abstract}

\section*{Acknowledgement}
We would like to thank Campbell University's faculty development and research committee for providing funding to help with this paper.

\section{Introduction}

The main focus of this paper is to introduce a generalization of bounded geometry that we will call bounded scale measure. From there, we will introduce several useful equivalences of bounded scale measure and properties that spaces with bounded scale measure satisfy. Finally, we will define a generalization of property A for large scale structures with bounded geometry and for large scale spaces with bounded scale measure. We call this new generalization property A at scale $\UU$, and show that it is a coarse invariant. To begin, we will introduce the basic definitions needed to understand large scale structures.

\begin{Definition}
	Let $\mathcal{U}$ be a family of subsets of a set $X$ and let $V$ be a subset of $X$. The \textbf{star} of $V$ against $\mathcal{U}$, denoted $\st(V,\mathcal{U})$, is the set 
	$$\bigcup\limits_{\substack{U\in \mathcal{U}\\{U\cap V\neq \varnothing}}} U$$ 
	If $\mathcal{V}$ is another family of subsets of $X$, then the family $\left\{\st(V,\mathcal{U})|V\in \mathcal{V}\right\}$ of subsets of $X$ is denoted $\st(\mathcal{V},\mathcal{U})$ for convenience.
\end{Definition}

\begin{Definition}
	Let $\mathcal{U},\mathcal{V}$ be families of subsets of a set $X$. We say $\mathcal{U}$ is a \textbf{refinement} of $\mathcal{V}$ provided for every $U\in\mathcal{U}$ there is a $V\in\mathcal{V}$ so that $U\subseteq V$. In this same situation, we also say that $\mathcal{V}$ \textbf{coarsens} $\mathcal{U}$. Refinement is denoted as $\mathcal{U}\prec\mathcal{V}$.
\end{Definition}

It is sometimes needed to consider covers of $X$ instead of collections of subsets of $X$. To distinguish families of subsets of $X$ from covers of $X$, we call covers of $X$ scales:

\begin{Definition}
	Given a set $X$, we say $~\mathcal{U}$ is a \textbf{scale} of $X$ if $~\mathcal{U}$ is a family of subsets of $X$ that covers $X$. If $~\mathcal{U}$ is a collection of subsets of $X$, we can make $\mathcal{U}$ into a cover via constructing $\mathcal{U}'=\mathcal{U}\cup \left\{\{x\}\right\}_{x\in X}$. This extension is often called the \textbf{trivial extension} of $\mathcal{U}$.
\end{Definition}

The definition of large scale structures was given by Dydak and Hoffland in \cite{Dydak}. This equivalent interpretation of coarse structures allows coarse geometry to be approached in a more topological way.

\begin{Definition}
	Let $X$ be a set. A \textbf{large scale structure} on $X$ is a non-empty set of families $\mathcal{LSS}$ of subsets of $X$ so that the following conditions are satisfied:
	\begin{enumerate}
		\item If we have families $\mathcal{U},\mathcal{V}$ of subsets of X with $\mathcal{V}\in\mathcal{LSS}$ and each element $U$ of $\mathcal{U}$ consisting of more than one point is contained in some $V$ of $\mathcal{V}$, then $\mathcal{V}\in\mathcal{LSS}$.
		\item If $\mathcal{U},\mathcal{V}\in\mathcal{LSS},$ then $\st(\mathcal{U},\mathcal{V})\in\mathcal{LSS}$.
	\end{enumerate} 
	Elements  $~\mathcal{U}$ of $\mathcal{LSS}$ are called \textbf{uniformly bounded families} or \textbf{uniformly bounded scales}.
If a large scale structure exists on a space $X$, we call the pair $(X, \mathcal{LSS})$ a \textbf{large scale space}.
\end{Definition} 

We note here closure under refinements implies the first condition above. The advantage of having a weaker first requirement is that a large scale structure as defined "disregards" one point sets. That is, one point sets do not "change" the large scale structure. Also, the first item in the definition gives us that the cover $\{\{x\}\}_{x\in X}$ is uniformly bounded for any large scale structure. 

The following definition is also a nice example of large scale structures:

\begin{Definition}
Let $M$ be a metric space with metric $d$. $\mathcal{LSS}$ is the \textbf{large scale structure induced by the metric} is the following collection of uniformly bounded families: $\UU\in\mathcal{LSS}$ if and only if $\sup\limits_{U\in\UU} diam(U)$ is finite, where $diam(U)=\sup\limits_{x,y\in U} d(x,y)$. Note that this definition also applies to metrics that are allowed to take on the value $\infty$. Such metrics are called $\infty$-\textbf{metrics}.
\end{Definition}

\begin{Remark}
In \cite{Dydak}, it was shown in Theorem 1.8 that a large scale structure induced by a metric is generated by the covers by $n-balls$ for $n\in\mathds{N}$. This is to say that every uniformly bounded family of a large scale structure is a refinement of a cover of $n-balls$ for some $n$. We will use this idea occasionally throughout the paper.
\end{Remark}

As is often the case with mathematical objects, we seek to compare one object to another. We do this through large scale continuous functions. Think of large scale continuous functions as analogous to continuous functions in topology.

\begin{Definition}
	Let $(X,\mathcal{LSS}_X)$ and $(Y,\mathcal{LSS}_Y)$ be large scale spaces and let $f:X\to Y$. We say $f$ is \textbf{large scale continuous} or \textbf{bornologous} if for every\\ $\mathcal{U}\in\mathcal{LSS}_X,~f(\mathcal{U})\in\mathcal{LSS}_Y$, where $f(\mathcal{U})=\{f(U)|~U\in\mathcal{U}\}$. 
\end{Definition}

There are other types of large scale functions that are useful to know:

\begin{Definition}
	Let $(X,\mathcal{LSS}_X)$ and $(Y,\mathcal{LSS}_Y)$ be large scale spaces and let $f:X\to Y$. We say $f$ is a \textbf{coarse embedding} if for every $\mathcal{V}\in\mathcal{LSS}_Y$, we have $f^{-1}(\mathcal{V})\in\mathcal{LSS}_X$, where $f^{-1}(\mathcal{V})=\{f^{-1}(V)|~V \in \mathcal{V}\}$. We say $f$ is \textbf{coarsely surjective} if there exists a $\mathcal{V}\in\mathcal{LSS}_Y$ so that $Y\subseteq\st(f(X),\mathcal{V})$.
\end{Definition}

We now wish to define something analogous to homeomorphisms in topology. However, there is a wrinkle. Homeomorphisms require functional inverses (i.e. the composition of $f$ and its inverse in either order is the identity function on the appropriate set). However, in large scale geometry we can weaken the notion of requiring a functional inverse. This is because two distinct functions can appear similar should one ``zoom out'' far enough. That is, the outputs of the functions remain a uniformly bounded distance apart. Such functions are called close. 
 
\begin{Definition}
	Let $(X,\mathcal{LSS}_X)$ and $(Y,\mathcal{LSS}_Y)$ be large scale spaces and let \\ $f,g:X\to Y$. We say $f$ and $g$ are \textbf{close} provided there is a $\mathcal{V}\in\mathcal{LSS}_Y$ so that for any $x\in X,~f(x),g(x)\in V$ for some $V\in\mathcal{V}$.
\end{Definition}

%\begin{Definition}
%	Let $(X,\mathcal{LSS}_X)$ and $(Y,\mathcal{LSS}_Y)$ be large scale spaces and let $f:X\to Y$ be large scale continuous. We say $f$ is a \textbf{coarse equivalence} provided $f$ is a coarse embedding and coarsely surjective. If $(X,\mathcal{LSS}_X)$ and $(Y,%\mathcal{LSS}_Y)$ are large scale spaces and a coarse equivalence $f:X\to Y$ exists, we say $(X,\mathcal{LSS}_X)$ and $(Y,\mathcal{LSS}_Y)$ are \textbf{coarsely equivalent}.
%\end{Definition}

Notice how the following definition of our version of a homeomorphism differs only in that we require our functional compositions are close to the appropriate identity as opposed to being the identity functions themselves:

\begin{Definition}
	Let $(X,\mathcal{LSS}_X)$ and $(Y,\mathcal{LSS}_Y)$ be large scale spaces and let $f:X\to Y$ be large scale continuous. f is a \textbf{coarse equivalence} if and only if there exists a large scale continuous map $g:Y\to X$ so that $f\circ g$ is close to $id_Y$ and $g\circ f$ is close to $id_X$.
\end{Definition}

\begin{Remark}
Let $(X,\mathcal{LSS}_X)$ and $(Y,\mathcal{LSS}_Y)$ be large scale spaces. We show in the appendix that $f:X\to Y$ is a coarse equivalence if and only if $f:X\to Y$ is a coarse embedding and coarsely surjective.
\end{Remark}

\begin{Example}
Let $(\mathbb{Z},\mathcal{LSS})$ and $(\mathbb{R},\mathcal{LSS}')$ be the large scale structures induced by the metric of absolute value. These large scale structures are coarsely equivalent. Let $f:\mathbb{Z}\to\mathbb{R}$ be inclusion and let $g:\mathbb{R}\to\mathbb{Z}$ be the floor function. One can show that $f$ and $g$ are large scale continuous and are close to the appropriate identity functions when composed.
\end{Example}

Like in topology, we are interested in studying properties of large scale spaces that are preserved by coarse equivalences. We give a definition of such properties below:

\begin{Definition}
	Let $(X,\mathcal{LSS}_X)$ be a large scale space. A property $P$ is a \textbf{coarse invariant} if for any $(Y,\mathcal{LSS}_Y)$ that has property $P$ and is coarsely equivalent to $(X,\mathcal{LSS}_X)$ we have that $(X,\mathcal{LSS}_X)$ also has property $P$.
\end{Definition}
\section{Bounded Scale Measure}

In order to motivate bounded scale measure, recall the definition of a large scale space having bounded geometry:

\begin{Definition}
Let $(X,\mathcal{LSS})$ be a large scale space. We say $(X,\mathcal{LSS})$ has \textbf{bounded geometry} if for every $\UU\in\mathcal{LSS}$ there exists an $n\in\mathds{N}$ so that for all $U\in\UU$ we have that $|U|\leq n$.
\end{Definition}

One of the main issues with considering large scale spaces with bounded geometry is that bounded geometry is not a coarse invariant. Indeed, as in Example 1.2 $(\mathds{Z},\mathcal{LSS})$ and $(\mathds{R},\mathcal{LSS}')$ are coarsely equivalent, but $(\mathds{Z},\mathcal{LSS})$ has bounded geometry while $(\mathds{R},\mathcal{LSS}')$ does not. The idea for bounded scale measure came while trying to create a generalization of bounded geometry that is also a coarse invariant. The idea is to consider elements of a special scale, normally referred to as $\UU$, to be the ``points" of the set, and use that to get a finite bounds on all other scales of $X$. 

We will now give a definition of bounded scale measure with regards to maximal sets. From there, we will give two equivalent definitions that are useful for proving other properties that spaces with bounded scale measure satisfy. Finally, we will show that having bounded scale measure is a coarse invariant.

\begin{Definition} Given a scale $\mathcal{U}$ on $X$ and given a subset $A$ of $X$, a \textbf{$\mathcal{U}$-net} in $A$ is a maximal subset $B$ of $A$ with respect to the following property:
no two elements of $B$ belong to the same element of the scale $\mathcal{U}$.
\end{Definition}

\begin{Definition}
$(X,\mathcal{LSS})$ has \textbf{bounded scale measure at scale} $\UU$ if there exists a scale $\mathcal{U}$ such that for all scales $\mathcal{V}$ there exists a natural number $u(\mathcal{V})$ such that for all $V\in \mathcal{V}$ any $\mathcal{U}$-net of $V$
has at most $u(\mathcal{V})$ elements.
\end{Definition}

It turns out that this definition can be weakened somewhat. Having bounded scale measure at scale $\UU$ currently requires that for any scale $\VV$ and any element $V$ of $\VV$ has the property described \textbf{for all} $\UU$-nets of $V$. However, we will show that having bounded scale measure at scale $\UU$ is equivalent to having a scale $\mathcal{W}$ so that for any scale $\VV$ and element $V$ of $\VV$, \textbf{there exists} a $\mathcal{W}$-net of $V$ that has the property described. Namely, the scale $\mathcal{W}$ is $\st(\UU,\UU)$.

\begin{Theorem}
$(X,\mathcal{LSS})$ has bounded scale measure if and only if there exists a scale $\mathcal{U}$ such that for all scales $\mathcal{V}$ there exists a natural number $n(\mathcal{V})$ so that for all $V\in \mathcal{V}$ there is a $\mathcal{U}$-net of $V$
which has at most $n(\mathcal{V})$ elements.
\end{Theorem}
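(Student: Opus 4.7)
The equivalence decomposes into two implications. The forward direction ($\Rightarrow$) is essentially immediate: if $(X,\mathcal{LSS})$ has bounded scale measure at scale $\UU$ with bounds $u(\VV)$, then for every $\VV$ and every $V\in\VV$ a $\UU$-net of $V$ exists by a Zorn's lemma argument, and every such net has at most $u(\VV)$ elements; in particular at least one $\UU$-net witnesses the existential formulation, so the right-hand condition holds with the same scale $\UU$ and $n(\VV):=u(\VV)$.

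For the reverse implication ($\Leftarrow$), suppose the existential condition holds at scale $\UU$ with bounds $n(\VV)$. The plan is to prove that bounded scale measure holds at the coarser scale $\WW:=\st(\UU,\UU)$, which lies in $\mathcal{LSS}$ by closure of large scale structures under the star operation. Fix a scale $\VV$, an element $V\in\VV$, and an arbitrary $\WW$-net $B$ of $V$; let $C$ be a $\UU$-net of $V$ with $|C|\leq n(\VV)$ as guaranteed by the hypothesis. I will produce an injection $\phi\colon B\hookrightarrow C$, which immediately yields $|B|\leq n(\VV)$.

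The map is built pointwise. For each $b\in B\subseteq V$, maximality of $C$ as a $\UU$-net of $V$ produces some $c\in C$ and some $U\in\UU$ with $b,c\in U$ (taking $c:=b$ and any $U\in\UU$ containing $b$ in the degenerate case $b\in C$); set $\phi(b):=c$. Injectivity is the heart of the argument: if $\phi(b_1)=\phi(b_2)=c$ with $b_1\ne b_2$ and witnesses $U_1,U_2\in\UU$ (so $b_i,c\in U_i$), then $c\in U_1\cap U_2$ forces $U_1\subseteq\st(U_2,\UU)\in\WW$, and consequently both $b_1\in U_1$ and $b_2\in U_2\subseteq\st(U_2,\UU)$ lie in the single element $\st(U_2,\UU)$ of $\WW$, contradicting the assumption that $B$ is $\WW$-separated.

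The main obstacle is spotting the correct enlarged scale $\WW=\st(\UU,\UU)$: without passing from $\UU$ to $\WW$, the map $\phi$ has no reason to be injective, because two $\UU$-elements $U_1,U_2$ sharing a single point $c$ can easily contain distinct $\UU$-separated points $b_1\in U_1$, $b_2\in U_2$. The one-line computation $U_1\subseteq\st(U_2,\UU)$ is exactly what collapses this ambiguity, and it only becomes useful once one chooses $\WW$-separation rather than $\UU$-separation for the ambient net $B$.
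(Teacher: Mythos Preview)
Your proof is correct and follows the same strategy as the paper: both pass to the coarsened scale $\WW=\st(\UU,\UU)$ and use maximality of the given $\UU$-net to assign each $\WW$-separated point a partner in the small net, then argue that two distinct $\WW$-separated points cannot share a partner because the two witnessing $\UU$-sets would amalgamate into a single element of $\WW$. The only difference is bookkeeping: the paper restricts the injection to $A\setminus B$ and obtains the bound $2n(\VV)$, whereas you define $\phi$ on all of $B$ (handling $b\in C$ by $\phi(b)=b$) and get the sharper bound $n(\VV)$ directly; this is a minor streamlining rather than a different argument.
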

\begin{proof}
First suppose there exists a scale $\mathcal{U}$ such that for all scales $\mathcal{V}$ there exists a natural number $u(\mathcal{V})$ such that for all $V\in \mathcal{V}$ any $\mathcal{U}$-net of $V$
has at most $u(\mathcal{V})$ elements. Let $\mathcal{V}$ be a scale of $X$ and let $V\in \mathcal{V}$. Take $x\in V$ and then choose all points $x_i, x_j \in X$  such that if $i\neq j$ $\{x_i,x_j\}\notin U$ for all $U\in \mathcal{U}$. By construction this is a $\mathcal{U}$-net in $V$ and thus there exists $k \in \mathbb{N}$ with $k\leq u(\mathcal{V})$, as otherwise we have a $\mathcal{U}$-net with more than $u(\mathcal{V})$ elements. Thus taking our $n(\mathcal{V})=u(\mathcal{V})$ and the collection $\{x_i\}_{1\leq i \leq k}$ satisfies our condition.\\

Conversely, suppose there exists a scale $\mathcal{U}$ such that for all scales $\mathcal{V}$ there exists a natural number $n(\mathcal{V})$ such that for all $V\in \mathcal{V}$ there is a $\mathcal{U}$-net, $B$, which has at most $n(\mathcal{V})$ elements. Define $\mathcal{W}=st(\mathcal{U},\mathcal{U})$, and let $\mathcal{V}$ be any scale of $X$. Let $A$ be any $\mathcal{W}$-net in $V$ and $B$ be the $\mathcal{U}$-net with $|B|\leq n(\mathcal{V})$. 

We claim that $|A\setminus B| \leq n({\mathcal{V}})$. By contradiction assume $|A\setminus B|>n(\mathcal{V})$. By maximality of $B$, given any $x_i\in A\setminus B$, there exists $y_i \in B$ such that $\{x_i,y_i\}\subset U_i$ for $U_i\in \mathcal{U}$. As well, for any $x_j \in A \setminus B$ with $x_i\neq x_j$ we can find a $y_j$ with the same property above. If  $y_i=y_j$, then $\{x_i,x_j\}\subset st(U_i,\mathcal{U})$ which is a contradiction as $A$ is a $\mathcal{W}$-net in $V$. Now, take a collection of elements $\{x_i\}_{1\leq i \leq n(\mathcal{V})}$ of $A\setminus B$, and associate with each $x_i$ the unique $y_i \in B$ and form the collection $\{y_i\}_{1\leq i \leq n(\mathcal{V})}$. $\{y_i\}_{1\leq i \leq n(\mathcal{V})} = B$ as each $y_i$ is unique and $|B|\leq n(\mathcal{V})$. Since $|A\setminus B|>n(\mathcal{V})$ there exists $x \in (A\setminus B)\setminus \{x_i\}_{1\leq i\leq n(\mathcal{V})}$. However, by construction, given any $y\in B$, $\{x,y\}\notin U$ for all all $U \in \mathcal{U}$. Otherwise, suppose there exists a $y\in B$ with $\{x,y\}\subset U$. Since $\{y_i\}_{1\leq i \leq n(\mathcal{V})} = B$, $y=y_i$ for some $i$ and $\{x,x_i\}\subset st(U_i,\mathcal{U})$ which contradicts $A$ being a $\mathcal{W}$-net. Thus $\{x,y\}\notin U$ for all all $U \in \mathcal{U}$. Therefore $B$ is not maximal as $x\notin B$. Hence, $B$ is not a $\mathcal{U}$-net which gives a contradiction. Thus $|A\setminus B| \leq n({\mathcal{V}})$, and any $\mathcal{W}$-net must have less than $2n(\mathcal{V})+1$ elements as $\mathcal{W} = st(\mathcal{U},\mathcal{U})$. This satisfies the definition of bounded scale measure at scale $\mathcal{W}$.
\end{proof}
Our next definition is useful for working with scales as it gives a way to compare any scale $\VV$ against the scale $\UU$ where the $\UU$-nets are defined.
\begin{Theorem}
Given a scale $\UU$, a large scale space $(X,\mathcal{LSS})$ has bounded scale measure at scale $\mathcal{U}$ if and only if for all scales $\mathcal{V}$ there exists a natural number $k(\mathcal{V})$ such that for all $V \in \mathcal{V}$, $V\subseteq \bigcup\limits_{i=1}^{k(\mathcal{V})} U_i$ for a collection $U_i\in \mathcal{U}$.
\end{Theorem}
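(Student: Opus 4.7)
My plan is to prove the equivalence by establishing each implication separately. The reverse direction follows almost immediately from the defining property of a $\mathcal{U}$-net, whereas the forward direction requires a second application of bounded scale measure to a carefully chosen auxiliary scale.

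For $(\Leftarrow)$, suppose the covering condition holds. Given a scale $\mathcal{V}$ and $V \in \mathcal{V}$, fix elements $U_1, \ldots, U_{k(\mathcal{V})} \in \mathcal{U}$ with $V \subseteq \bigcup_{i=1}^{k(\mathcal{V})} U_i$. If $B$ is any $\mathcal{U}$-net in $V$, then each $U_i$ contains at most one element of $B$, since two distinct elements of $B$ inside a common $U_i \in \mathcal{U}$ would violate the defining property of a $\mathcal{U}$-net. Hence $|B| \leq k(\mathcal{V})$, and setting $u(\mathcal{V}) := k(\mathcal{V})$ witnesses bounded scale measure at scale $\mathcal{U}$.

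For $(\Rightarrow)$, suppose bounded scale measure at scale $\mathcal{U}$ holds. Fix a scale $\mathcal{V}$ and $V \in \mathcal{V}$, and choose a $\mathcal{U}$-net $B = \{y_1, \ldots, y_n\}$ in $V$ with $n \leq u(\mathcal{V})$. Maximality of $B$ gives that for every $x \in V$ either $x \in B$, or there exist $y_i \in B$ and $U \in \mathcal{U}$ with $\{x, y_i\} \subseteq U$, so $V \subseteq \bigcup_{i=1}^n \st(\{y_i\}, \mathcal{U})$. To pass from this ``star cover'' to a cover by elements of $\mathcal{U}$, I would use that $\{\{z\}\}_{z \in X} \in \mathcal{LSS}$ (noted in the remark following the definition of a large scale structure), whence $\st(\{\{z\}\}_{z \in X}, \mathcal{U}) \in \mathcal{LSS}$ by the second axiom. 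Applying bounded scale measure at scale $\mathcal{U}$ to this scale yields a uniform bound $m$ on the size of any $\mathcal{U}$-net inside any star $\st(\{y\}, \mathcal{U})$. Inside each $\st(\{y_i\}, \mathcal{U})$, take a maximal $\mathcal{U}$-net $C_i$ containing $y_i$, with $|C_i| \leq m$; for each $z \in C_i$, select a $\mathcal{U}$-element $U_{i,z}$ containing both $z$ and $y_i$. The claim is that the collection $\{U_{i,z} : 1 \leq i \leq n,\ z \in C_i\}$, of total size at most $u(\mathcal{V}) \cdot m$, covers $V$; thus one may take $k(\mathcal{V}) := u(\mathcal{V}) \cdot m$.

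The main obstacle I expect is the verification of the final claim: showing that the selected $\mathcal{U}$-elements actually cover the intersection $\st(\{y_i\}, \mathcal{U}) \cap V$ rather than merely cover the net points $C_i$. Maximality of $C_i$ provides, for each $x \in \st(\{y_i\}, \mathcal{U})$, some $z \in C_i$ and $U' \in \mathcal{U}$ with $\{x, z\} \subseteq U'$, but $U'$ need not be the pre-selected $U_{i,z}$; making this step work requires a coordinated choice of the witnessing $\mathcal{U}$-elements through each net point (using that both $y_i$ and $z$ lie in a common $\mathcal{U}$-element by construction of the star), so that the chosen family actually captures every point covered by a net witness.
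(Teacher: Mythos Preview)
Your reverse direction is correct and essentially identical to the paper's. The forward direction, however, has a genuine gap---one you yourself flag but do not resolve---and the obstacle is not merely technical: the claim you are trying to establish (covering each $V$ by uniformly finitely many elements of $\mathcal{U}$ \emph{itself}) is false in general. Take $X$ countably infinite with the maximal large scale structure (every family of subsets is uniformly bounded), and let $\mathcal{U}$ consist of all two-element subsets together with all singletons. Any two distinct points lie in a common element of $\mathcal{U}$, so every $\mathcal{U}$-net has at most one point and bounded scale measure at $\mathcal{U}$ holds with $u(\mathcal{V})=1$ for every $\mathcal{V}$; yet for $\mathcal{V}=\{X\}$ no finite subfamily of $\mathcal{U}$ covers $X$. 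Hence no ``coordinated choice of witnesses'' through the net points $C_i$ can salvage your argument, because the target statement at scale $\mathcal{U}$ simply fails.

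The paper sidesteps this by not insisting on the same scale in the forward direction: it passes to $\mathcal{W}=\st(\mathcal{U},\mathcal{U})$. Given a $\mathcal{U}$-net $B$ in $V$ with $|B|\le u(\mathcal{V})$, one picks for each $b\in B$ a single $U_b\in\mathcal{U}$ containing $b$ and sets $W_b=\st(U_b,\mathcal{U})\in\mathcal{W}$; maximality of $B$ then gives $V\subseteq\bigcup_{b\in B}W_b$ immediately---one $\mathcal{W}$-element per net point, with no second application of bounded scale measure and no coordination problem. So what the paper actually proves is that bounded scale measure at $\mathcal{U}$ implies the covering condition at $\st(\mathcal{U},\mathcal{U})$, while the converse holds at the same scale; this asymmetry is harmless for the later applications in view of Corollary~2.3. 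Your attempt to prove the literal ``same-scale'' biconditional was aiming at a stronger statement than the paper's argument supports.
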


\begin{proof}
Let $(X,\mathcal{LSS})$ have bounded scale measure at scale $\UU$, and let $\mathcal{V}$ be a scale of $\mathcal{LSS}$. Let $\mathcal{U}$ and $u(\mathcal{V})$ be defined as they are in the definition of bounded scale measure. Define $\mathcal{W} = st(\mathcal{U},\mathcal{U})$. Take a $\mathcal{U}$-net of $V \in \mathcal{V}$ and call it $B$. Then $|B| \leq u(\mathcal{V})$ since  $(X,\mathcal{LSS})$ has bounded scale measure. As $\UU$ is a cover of $X$, for each $b\in B$ select $U_b \in \mathcal{U}$ such that $b\in U_b$, and define $W_b = st(U_b,\mathcal{U})$.

First, we claim that $V \subseteq \bigcup\limits_{i=1}^{u(\mathcal{V})}W_{b_i}$.
Let $x\in V$. Then if $x \in B$, by construction there exists a $U_x \in \mathcal{U}$ such that $x \in U_x \subseteq \bigcup\limits_{i=1}^{u(\mathcal{V})}W_{b_i}$. If $x \notin B$ then by maximality of $B$ there exists $b\in B$ such that $\{x,b\}\subset U$ for some $U\in \mathcal{U}$. This means $x\in st(U_b,\mathcal{U})$ and $x\in  \bigcup\limits_{i=1}^{u(\mathcal{V})}W_{b_i}$. Therefore, $V \subseteq \bigcup\limits_{i=1}^{u(\mathcal{V})}W_{b_i}$ Letting $\mathcal{W}$ be our designated scale and letting $k(\mathcal{V})=u(\mathcal{V})$ satisfies our condition.\\

Conversely, suppose there exists a scale $\mathcal{U}$ such that for all scales $\mathcal{V}$ there exists a natural number $k(\mathcal{V})$ such that for all $V \in \mathcal{V}$, $V\subset \bigcup\limits_{i=1}^{k(\mathcal{V})} U_i$ for a collection $U_i\in \mathcal{U}$. Let $\mathcal{V}$ be any element of $\mathcal{LSS}$. Let $V\in \mathcal{V}$ and let $B$ be a $\mathcal{U}$-net of $V$. \\
We claim that $|B|\leq k(\mathcal{V})$. By contradiction assume $|B|>k(\mathcal{V})$. Then by hypothesis there exist $U_i\in \mathcal{U}$ for $1\leq i\leq k(\VV)$ such that $V\subseteq \bigcup\limits_{i=1}^{k(\mathcal{V})} U_i$. Given any $x,y\in B$, then $\{x,y\}\nsubseteq U_i$ for all $1\leq i \leq  k(\mathcal{V})$. However, each $x\in B$ must be in some $U_i$ as the $U_i$'s cover $V$. For each $U_i$, assign the unique $x_i\in B$ such that $x_i \in U_i$. By assumption $|B|>k(\mathcal{V})$, so there exists $x\in B\setminus\{x_i\}_{1\leq i \leq k(\mathcal{V})}$. Since $B$ is a $\mathcal{U}$-net, then $x \notin U_i$ for all $i$ with $1\leq i \leq k(\mathcal{V})$. However this means $x\notin V$ as $V\subseteq \bigcup\limits_{i=1}^{k(\mathcal{V})} U_i$. This tells us that $B \nsubseteq V$, so $B$ is not a $\mathcal{U}$-net which is a contradiction. Therefore for any $\mathcal{U}$-net in $V$, $|B|\leq k(\mathcal{V})$. Then by definition, $(X,\mathcal{LSS})$ has bounded scale measure at scale $\UU$.
\end{proof}

\begin{Corollary}
Let $(X, \mathcal{LSS})$ have bounded scale measure at scale $\UU$ and let $\UU\prec\VV$ with $\VV\in\mathcal{LSS}$. Then $(X,\mathcal{LSS})$ has bounded scale measure at scale $\VV$.
\end{Corollary}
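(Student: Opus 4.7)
The cleanest route is to bypass $\UU$-nets entirely and work with the covering-based characterization provided by Theorem 2.2. That theorem says bounded scale measure at scale $\UU$ is equivalent to the statement: for every scale $\mathcal{V}'$ there is a number $k(\mathcal{V}')$ such that every $V' \in \mathcal{V}'$ is covered by at most $k(\mathcal{V}')$ members of $\UU$. The corollary then reduces to showing the analogous covering property with $\UU$ replaced by $\VV$, which should be essentially automatic from $\UU\prec\VV$.

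First I would verify that $\VV$ is actually a scale (i.e., a cover of $X$), since bounded scale measure is only defined relative to a scale. Given $x \in X$, the fact that $\UU$ covers $X$ produces some $U \in \UU$ containing $x$, and $\UU \prec \VV$ produces a $W \in \VV$ with $U \subseteq W$; hence $x \in W$, so $\VV$ covers $X$. Next, I would fix an arbitrary scale $\mathcal{V}'$ and apply the forward direction of Theorem 2.2 to obtain a number $k(\mathcal{V}')$ and, for each $V' \in \mathcal{V}'$, elements $U_1,\dots,U_{k(\mathcal{V}')} \in \UU$ with $V' \subseteq \bigcup_{i=1}^{k(\mathcal{V}')} U_i$.

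The central move is then a single application of the refinement hypothesis: for each $i$, pick $W_i \in \VV$ with $U_i \subseteq W_i$. Then
\[
V' \;\subseteq\; \bigcup_{i=1}^{k(\mathcal{V}')} U_i \;\subseteq\; \bigcup_{i=1}^{k(\mathcal{V}')} W_i,
\]
so $V'$ is covered by at most $k(\mathcal{V}')$ elements of $\VV$. Setting $k'(\mathcal{V}') = k(\mathcal{V}')$, the converse direction of Theorem 2.2 applied with the scale $\VV$ in place of $\UU$ delivers bounded scale measure at scale $\VV$.

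There is essentially no serious obstacle here; the only mildly subtle point is remembering that Theorem 2.2 is stated with a fixed underlying scale and then invoking its converse direction for the new scale $\VV$. A purely net-theoretic proof would also work but would require re-running the maximality argument from Theorem 2.1, whereas the covering equivalence makes the refinement step a one-line enlargement of sets.
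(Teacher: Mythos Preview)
Your proposal is correct and follows essentially the same route as the paper's proof: both invoke Theorem 2.2, cover an arbitrary element of an arbitrary scale by finitely many members of $\UU$, and then enlarge each of these to a member of $\VV$ via the refinement hypothesis. Your additional check that $\VV$ is actually a cover is a nice point of care the paper omits, but otherwise the arguments are the same.
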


\begin{proof}
Let $(X, \mathcal{LSS})$ have bounded scale measure at scale $\UU$ and let $\UU\prec\VV$ with $\VV\in\mathcal{LSS}$. Let $\WW$ be any other scale in $\mathcal{LSS}$. Then by Theorem 2.2 any element $W$ of $\WW$ can be covered by $n(\UU)$ elements of $\UU$, call them $U_1,...,U_{n(\UU)}$. As $\UU\prec\VV$, each $U_1,...,U_{n(\UU)}$ is contained in an element of $\VV$; call them $V_1,...,V_{n(\UU)}$. Since we have for any $W\in\WW$ $W\subseteq\bigcup\limits_{i=1}^{n(\UU)} U_i$ and $\bigcup\limits_{i=1}^{n(\UU)} U_i\subseteq\bigcup\limits_{i=1}^{n(\UU)} V_i$, we have any element of $\WW$ can be covered by finitely many elements of $\VV$. Thus, $(X,\mathcal{LSS})$ has bounded scale measure at scale $\VV$ by Theorem 2.2.  
\end{proof}

The easiest example of large scale spaces with bounded scale measure is large scale spaces with bounded geometry. After all, bounded scale measure is meant to generalize bounded geometry. Bounded geometry is thought to be bounded scale measure at scale $\UU$ where $\UU$ is the cover of one point sets.

\begin{prop}
Let $(X,\mathcal{LSS})$ be a large scale space. If $(X,\mathcal{LSS})$ has bounded geometry, then $(X,\mathcal{LSS})$ has bounded scale measure.
\end{prop}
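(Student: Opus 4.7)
The plan is to exhibit the scale $\UU=\{\{x\}\}_{x\in X}$ of singletons as a witness for bounded scale measure. This is the natural choice because the whole point of bounded scale measure is to treat elements of $\UU$ as ``points,'' and bounded geometry already treats the actual points of $X$ this way.

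First I would verify that $\UU$ is indeed a scale: by definition $\UU$ covers $X$, and moreover $\UU$ belongs to $\mathcal{LSS}$ by the first axiom of a large scale structure (as noted in the remark after the definition). Next I would make the key observation about $\UU$-nets for this $\UU$: given any subset $V\subseteq X$, if $B\subseteq V$ is any subset, then any two distinct elements of $B$ lie in different singletons of $\UU$ simply because the singletons are pairwise disjoint. Thus the defining condition of a $\UU$-net is vacuous, so maximality forces $B=V$. In particular, the unique $\UU$-net in any $V$ has exactly $|V|$ elements.

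With this in hand, the proof essentially writes itself. Let $\VV\in\mathcal{LSS}$ be an arbitrary uniformly bounded scale. By bounded geometry there is some $n\in\mathbb{N}$ with $|V|\leq n$ for every $V\in\VV$. Setting $u(\VV):=n$, any $\UU$-net of $V$ is $V$ itself and therefore has at most $u(\VV)$ elements. This is exactly the condition in Definition 2.2, so $(X,\mathcal{LSS})$ has bounded scale measure at scale $\UU$.

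There is no real obstacle here; the statement is essentially definitional once the right witness is chosen. The only subtle point worth mentioning explicitly is the behavior of $\UU$-nets when $\UU$ consists of singletons, since this is the place where the singleton structure of $\UU$ is converted into the cardinality bound supplied by bounded geometry.
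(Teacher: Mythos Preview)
Your proof is correct and takes essentially the same approach as the paper: both choose $\UU$ to be the trivial cover by singletons. The only cosmetic difference is that you verify the original $\UU$-net definition directly (observing that the unique $\UU$-net in any $V$ is $V$ itself), whereas the paper invokes the covering characterization of Theorem~2.2 by writing $V=\bigcup_{i=1}^{n}\{x_i\}$.
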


\begin{proof}
Define $\mathcal{U}$ to be the trivial cover of $X$. Let $\mathcal{V}$ be any scale of $X$. Since $(X,\mathcal{LSS})$ has bounded geometry, then for any $V \in \mathcal{V}$ there is an $n\in\mathds{N}$ such that $|V| \leq n$. Therefore, $V=\displaystyle\bigcup_{i=1}^{n}\{x_i\}$, and letting $k(\mathcal{V})=n$ gives us $(X,\mathcal{LSS})$ has bounded scale measure.
\end{proof}

It turns out that if $(X,\mathcal{LSS})$ has bounded scale measure and $Y\subseteq X$, then $Y$ will have bounded scale measure too provided that we equip $Y$ with the appropriate large scale structure:

\begin{Definition}
Let $(X,\mathcal{LSS})$ be a large scale space and $Y\subseteq X$. The \textbf{subspace large scale structure} $\mathcal{LSS}'$ of $Y$ is the collection of uniformly bounded families defined in the following way: $\UU '\in\mathcal{LSS}'$ provided there exists $\UU\in\mathcal{LSS}$ such that $\UU '=\{U\cap Y~|~U\in\UU\}$.
\end{Definition}

\begin{prop} Let $(X,\mathcal{LSS})$ be a space with bounded scale measure at scale $\UU$. Then any subset $Y$ of $X$  with the subspace large scale structure also has bounded scale measure.
\end{prop}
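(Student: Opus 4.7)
The plan is to route the argument through Theorem 2.2, which characterizes bounded scale measure at scale $\UU$ by the clean statement that every element of any uniformly bounded scale can be covered by finitely many elements of $\UU$. This ``finite-union'' formulation interacts very well with the intersection operation $\cdot \cap Y$ that defines the subspace structure, whereas the original definition in terms of maximal $\UU$-nets does not: a $\UU$-net of $V$ does not in general intersect $Y$ in a $\UU'$-net of $V \cap Y$.

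Concretely, I would let $\UU$ be a scale of $X$ witnessing bounded scale measure and define
\[
\UU' \;=\; \{\, U \cap Y \;:\; U \in \UU \,\}.
\]
Because $\UU$ covers $X$ and $Y \subseteq X$, the family $\UU'$ covers $Y$, so it is a valid scale of $Y$. I would then pick an arbitrary $\VV' \in \mathcal{LSS}'$ and use the definition of the subspace large scale structure to choose a lift $\VV \in \mathcal{LSS}$ with $\VV' = \{\,V \cap Y : V \in \VV\,\}$.

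Next I would apply Theorem 2.2 on $X$ to obtain a natural number $k(\VV)$ such that for every $V \in \VV$ there exist $U_1,\ldots,U_{k(\VV)} \in \UU$ with $V \subseteq \bigcup_{i=1}^{k(\VV)} U_i$. Intersecting both sides with $Y$ yields
\[
V \cap Y \;\subseteq\; \bigcup_{i=1}^{k(\VV)} (U_i \cap Y),
\]
and each $U_i \cap Y$ lies in $\UU'$. Thus every $V' \in \VV'$ is covered by at most $k(\VV)$ elements of $\UU'$. Setting $k(\VV') := k(\VV)$ and invoking Theorem 2.2 in the reverse direction for $(Y,\mathcal{LSS}')$ gives bounded scale measure at scale $\UU'$.

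The argument is mostly a direct transport along intersection, so the main (minor) obstacle is only bookkeeping: the lift $\VV$ of $\VV'$ is not unique, but any single choice suffices since we only require existence of a bound $k(\VV')$. The decision to use Theorem 2.2 rather than the definition directly is what removes the real difficulty, since pushing $\UU$-nets through subsets is badly behaved.
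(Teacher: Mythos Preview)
Your argument is correct, but it proceeds differently from the paper. The paper works directly with the net definition: it observes that for $x,y\in Y$, the conditions $\{x,y\}\not\subseteq U$ and $\{x,y\}\not\subseteq U\cap Y$ are equivalent, so a $\UU'$-net of $V'\subseteq Y$ is already $\UU$-separated and hence (by maximality) a $\UU$-net of $V'$; the bound then comes from applying bounded scale measure of $X$ to the trivial extension of $\VV'$. Your route through Theorem~2.2 sidesteps any discussion of maximality and reduces the whole thing to intersecting a finite union with $Y$, which is arguably cleaner. Your remark that pushing $\UU$-nets through subsets is ``badly behaved'' is a little too pessimistic, though: the paper's point is exactly that $\UU$-separation and $\UU'$-separation coincide on subsets of $Y$, so nets do transfer nicely in this particular situation. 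One small bookkeeping point in your version: the lift $\VV\in\mathcal{LSS}$ you choose need not cover $X$, so strictly speaking Theorem~2.2 applies to its trivial extension; this changes nothing, since every $V\in\VV$ still sits inside that extension and inherits the same bound.
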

\begin{proof}
Let $(X,\mathcal{LSS})$ be a space with bounded scale measure at scale $\mathcal{U}$, and let $Y\subseteq X$ with $(Y,\mathcal{LSS}')$ the subspace large scale structure of $(X,\mathcal{LSS})$.Define $\UU '\in\mathcal{LSS}'$ to be $\UU '=\{U\cap Y~|~U\in\UU\}$. Let $\mathcal{V}'$ be a scale of $\mathcal{LSS}'$ and for $V'\in \mathcal{V}'$, let $B_{V'}$ be a $\mathcal{U'}$-net of $V'$. Define $\mathcal{V''}$ to be the trivial extension of $\mathcal{V'}$ to a scale of $\mathcal{LSS}'$ and $u(\mathcal{V''})$ be the natural number from the definition of bounded scale measure for $X$. If $V'' \subset X\setminus Y$, then $V''$ must be a point and a $\mathcal{U}$-net must also be a point. Otherwise $V''$ is an element of $\mathcal{V'}$, and a $\mathcal{U}$-net of $V''$ is also a $\mathcal{U'}$-net of $V'$. This is because given any $x,y\in B_{V'}\subset Y$, we know that for all $U\in \mathcal{U}, \{x,y\} \nsubseteq U \cap Y=U'$. As $x,y\in Y$, we then have $\{x,y\} \nsubseteq U$ for all $U\in \mathcal{U}$. Therefore, for any scale $\mathcal{V'}$ of $Y$, a $\mathcal{U'}$-net $B$ is a $\mathcal{U}$-net and since $(X,\mathcal{LSS})$ has bounded scale measure, $|B| \leq u(\mathcal{V''})$. Thus for any scale $\mathcal{V}'$ of $\mathcal{LSS}'$, setting $u(\mathcal{V})=u(\mathcal{V''})$, where $\mathcal{V''}$ is the trivial extension of $\mathcal{V'}$ over $X$ shows $(Y,\mathcal{LSS}')$ has bounded scale measure at scale $\UU'$.
\end{proof}

The following is a key result that distinguishes bounded scale measure from bounded geometry. A large scale space having bounded scale measure at some scale is a coarse invariant:

\begin{Theorem}
Let $(X,\mathcal{LSS}_X)$ and $(Y,\mathcal{LSS}_Y)$ be large scale spaces and let $f:X\rightarrow Y$ be a coarse equivalence. Then $(X,\mathcal{LSS}_X)$ has bounded scale measure if and only if $(Y,\mathcal{LSS}_Y)$ has bounded scale measure.
\end{Theorem}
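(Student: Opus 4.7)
The plan is to leverage the characterization from Theorem 2.2, which says that bounded scale measure at scale $\UU$ is equivalent to the condition that every element of every scale is covered by a uniformly bounded number of elements of $\UU$. Since coarse equivalence is symmetric, it suffices to prove one direction: assume $(X,\mathcal{LSS}_X)$ has bounded scale measure at some scale $\UU$, and produce a scale of $Y$ at which bounded scale measure holds. Let $g:Y\to X$ be a large scale continuous map with $f\circ g$ close to $id_Y$ and $g\circ f$ close to $id_X$, and let $\mathcal{C}_Y\in\mathcal{LSS}_Y$ witness the closeness of $f\circ g$ to $id_Y$.

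The natural candidate for the desired scale on $Y$ is $\UU_Y := \st(f(\UU),\mathcal{C}_Y)$. This lies in $\mathcal{LSS}_Y$ because $f(\UU)\in\mathcal{LSS}_Y$ (by bornologous $f$) and large scale structures are closed under the star operation. I would first verify that $\UU_Y$ actually covers $Y$: given $y\in Y$, closeness gives $C\in\mathcal{C}_Y$ with $y,f(g(y))\in C$, and choosing any $U\in\UU$ containing $g(y)$ yields $f(g(y))\in f(U)\cap C$, hence $y\in C\subseteq \st(f(U),\mathcal{C}_Y)\in\UU_Y$.

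To check bounded scale measure at $\UU_Y$ via Theorem 2.2, let $\WW$ be any scale of $Y$. Form the trivial extension $\widetilde{g(\WW)} := g(\WW)\cup\{\{x\}\}_{x\in X}$, which is a scale of $X$ and still lies in $\mathcal{LSS}_X$. Applying Theorem 2.2 to $\widetilde{g(\WW)}$ gives a natural number $k=k(\widetilde{g(\WW)})$ such that for every $W\in\WW$, the set $g(W)$ is contained in a union $U_1\cup\cdots\cup U_k$ with each $U_i\in\UU$. For any $w\in W$, closeness gives $C\in\mathcal{C}_Y$ with $w,f(g(w))\in C$, and $f(g(w))\in f(U_i)$ for some $i$, so $C\cap f(U_i)\neq\varnothing$ and $w\in\st(f(U_i),\mathcal{C}_Y)$. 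Hence $W\subseteq\bigcup_{i=1}^{k}\st(f(U_i),\mathcal{C}_Y)$, which is a union of $k$ elements of $\UU_Y$. Applying Theorem 2.2 in the reverse direction yields bounded scale measure on $(Y,\mathcal{LSS}_Y)$ at scale $\UU_Y$.

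The main obstacles I expect are essentially bookkeeping rather than ideas: ensuring $\UU_Y$ is genuinely a cover of $Y$ (which required using that the closeness witness $\mathcal{C}_Y$ relates each $y$ to a point in $f(X)$), and needing the trivial extension of $g(\WW)$ so that Theorem 2.2 can be invoked on a legitimate scale rather than on a family that only covers $g(Y)$. The passage from a cover of $g(W)$ to a cover of $W$ is then forced by the closeness relation $f\circ g\sim id_Y$, which is exactly what the star construction $\st(f(\UU),\mathcal{C}_Y)$ is designed to absorb.
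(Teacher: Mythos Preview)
Your argument is correct and in fact tidier in places than the paper's, but it follows a genuinely different route. The paper proves the two directions by separate methods: for $X\Rightarrow Y$ it works directly with $\UU$-nets, taking $f(\UU)$ as the scale on $Y$ and showing that any $f(\UU)$-net $B_V$ of $V\in\VV$ pulls back to a $\UU$-net $f^{-1}(B_V)$ of $f^{-1}(V)$, whence $|B_V|\le|f^{-1}(B_V)|\le u(f^{-1}(\VV))$; for $Y\Rightarrow X$ it switches to the covering characterization of Theorem~2.2, using $f^{-1}(\UU)$ and the containment $V\subseteq f^{-1}(f(V))$. You instead invoke the symmetry of coarse equivalence to reduce to a single direction and run the Theorem~2.2 characterization throughout, with the scale $\st(f(\UU),\mathcal{C}_Y)$ in place of $f(\UU)$. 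What your approach buys is cleaner bookkeeping: you explicitly verify that your candidate scale covers $Y$ (the paper's $f(\UU)$ need not, since $f$ is only coarsely surjective), and your use of the trivial extension of $g(\WW)$ avoids the analogous issue on the $X$ side. What the paper's net argument buys is a direct feel for how the cardinality bound transfers without passing through Theorem~2.2, at the cost of a more delicate maximality check.
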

\begin{proof}
Let $(X,\mathcal{LSS}_X)$ be a space with bounded scale measure at scale $\UU$. Let\\ $f:X\rightarrow Y$ be a coarse equivalence. Then $f(\mathcal{U})$ is a scale in $(Y,\mathcal{LSS}_Y)$. Now, let $\mathcal{V}$ be any scale of $(Y,\mathcal{LSS}_Y)$. For arbitrary $V\in \mathcal{V}$ let $B_V$ be a $f(\mathcal{U})$-net of $V$. We know that $f^{-1}(\mathcal{V})$ is a scale in $(X,\mathcal{LSS}_X)$ since $f$ is a coarse equivalence. We will show $f^{-1}(B_V)$ is a $\mathcal{U}$-net of $f^{-1}(V)$. By contradiction assume there exists $x,y\in f^{-1}(B_V)$ such that there is some $U\in \mathcal{U}$ with $x,y\in U$. However, this implies $f(x),f(y)\in f(U)$, but  $x,y\in f^{-1}(B_V)$ which implies $f(x),f(y)\in B_V$ which contradicts that $B_V$ is a $f(\mathcal{U})$-net. We now show maximality. Assume by contradiction that there is some $z\in f^{-1}(V)\setminus f^{-1}(B_V)$ such that for all $x \in B_V$ there does not exist a $U\in \mathcal{U}$ with $x,z\in U$. However, this implies there is not $f(U)\in f(\mathcal{U})$ with $f(x),f(z) \in f(U)$. By maximality of $B_V$, $f(z)\in B$ which contradicts that fact that $z \notin f^{-1}(B_V)$. Therefore $f^{-1}(B_V) $ is a $\mathcal{U}$-net of $f^{-1}(V)$. Since $(X,\mathcal{LSS}_X)$ has bounded scale measure we know there exists a natural number $u(f^{-1}(\mathcal{V}))$ such that $|f^{-1}(B_V)| \leq u(f^{-1}(\mathcal{V}))$. Further, since $f$ is a function we have that $|B_V|\leq |f^{-1}(B_V)|$. Finally, setting $u(\mathcal{V})= u(f^{-1}(\mathcal{V}))$ gives us that for any $f(\mathcal{U})$-net of $V \in \mathcal{V}$ has at most $u(\mathcal{V})$ elements and thus $(Y,\mathcal{LSS}_Y)$ has bounded scale measure at scale $f(\UU)$.\\
Conversely, let $(Y,\mathcal{LSS}_Y)$ be a space with bounded scale measure and let $f:X\rightarrow Y$ be a coarse equivalence. By theorem 2.2 there exists a scale $\mathcal{U}$ in $\mathcal{LSS}_Y$ such that for all scales $\mathcal{V}$ of $\mathcal{LSS}_Y$ there exists a natural number $k(\mathcal{V})$ so that for all $V \in \mathcal{V}$, $V\subseteq \bigcup\limits_{i=1}^{k(\mathcal{V})} U_i$ for a collection $U_i\in \mathcal{U}$. Since $f$ is a coarse equivalence, then $f^{-1}(\mathcal{U})$ is a uniformly bounded cover of $X$. Take any uniformly bounded cover $\mathcal{V}$ of $\mathcal{LSS}_X$. Then $f(\mathcal{V})$ is a cover of $\mathcal{LSS}_Y$. From the definition, there exists a collection of elements $U_i \in \mathcal{U}$ with $f(\mathcal{V})\subset\bigcup\limits_{i=1}^{k(f(\mathcal{V}))}U_i$. Thus $V\subset f^{-1}(f(V))\subset \bigcup\limits_{i=1}^{k(f(\mathcal{V}))}f^{-1}(U_i)$. Since $\mathcal{V}$ was chosen arbitrarily, $(X, \mathcal{LSS}_X)$ has bounded scale measure at scale $f^{-1}(\mathcal{U})$.
\end{proof}

%It turns out that bounded scale measure has a connection to coarsely n-to-1 functions. Recall the definition of a coarsely n-to-1 function given in \cite{Kyle}:

%\begin{Definition}
%Let $f:X\to Y$ be a function between large scale structures. $f$ is coarsely n-to-1 if and only if for every uniformly bounded cover $\UU_Y$ of the large scale structure of $Y$ there exists a uniformly bounded cover $\UU_X$ of the large scale structure of $X$ so that for any $B\in\UU_Y$ there are $B_1,...,B_n\in\UU_X$ so that $f^{-1}(B)\subseteq\bigcup\limits_{i=1}^{n} B_i$.
%\end{Definition}

%\begin{prop}
%Let $(X, \mathcal{LSS}_X)$ and $(Y, \mathcal{LSS}_Y)$ be large scale spaces with $f:X\to Y$ a coarse embedding. If $(X, \mathcal{LSS}$ has bounded scale measure, then $f$ is coarsely n-to-1.
%\end{prop}

%\begin{proof}
%Let $(X, \mathcal{LSS}_X)$ and $(Y, \mathcal{LSS}_Y)$ be large scale spaces with $f:X\to Y$ a coarse embedding. Let $(X,\mathcal{LSS}_X)$ have bounded scale measure at scale $\UU_X$. Let $\UU_Y$ be a scale in $\mathcal{LSS}_Y$. Then as $f$ is coarsely injective, $f^{-1}(\UU_Y)\in\mathcal{LSS}_X$. Then using bounded scale measure, one has $f$ is coarsely n-to-1 by definition.
%\end{proof}

%\begin{Remark}
%In \cite{cavp}, the authors show that every metrizable large scale structure is a coarsely doubling space if and only if the large scale space has bounded scale measure in Proposition 5.9.
%\end{Remark}

We now present some examples of spaces with bounded scale measure:

\begin{Definition}
A group $G$ has a \textbf{proper left invariant metric} if the topology generated by the metric on the group is proper (i.e. the topological closure of a bounded set is compact) and additionally for any $g,h,k\in G$ $d(kg,kh)=d(g,h)$.
\end{Definition}

\begin{Remark}
It's shown in \cite{NowakYu} that any countable group has such a metric. Further, $\mathbb{R}^n$ has the euclidean metric as a proper left invariant metric where $\mathbb{R}^n$ is viewed as a group under addition.
\end{Remark}

\begin{prop}
Let $G$ be a group with a proper left invariant metric and let $\mathcal{LSS}$ be the large scale structure induced from the metric. Then $(G,\mathcal{LSS})$ has bounded scale measure for any open cover of balls of radius $R>0$. 
\end{prop}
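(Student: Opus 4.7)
The plan is to apply Theorem 2.2 to reduce the claim to showing that every element of every uniformly bounded scale can be covered by a fixed finite number of balls of radius $R$. Fix $R>0$ and let $\mathcal{U}=\{B(g,R)\mid g\in G\}$ be the cover by open balls of radius $R$, which is certainly a scale (in fact, a uniformly bounded one). I want to show $(G,\mathcal{LSS})$ has bounded scale measure at scale $\mathcal{U}$.

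Let $\mathcal{V}\in\mathcal{LSS}$ be arbitrary. By Remark 1.1, there exists $n\in\mathbb{N}$ so that $\mathcal{V}$ refines the cover of $G$ by $n$-balls; in particular, every $V\in\mathcal{V}$ is contained in some ball $B(g_V,n)$. Hence it suffices to produce a constant $k=k(n)$, independent of the center $g_V$, with the property that any ball of radius $n$ is covered by at most $k$ elements of $\mathcal{U}$; then Theorem 2.2 yields bounded scale measure at scale $\mathcal{U}$.

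To obtain such a $k$, I will use properness at the identity and then push the result around by left translation. Since the metric is proper, the closed ball $\overline{B(e,n)}$ is compact, so the open cover $\{B(x,R)\mid x\in \overline{B(e,n)}\}$ admits a finite subcover $B(x_1,R),\ldots,B(x_k,R)$, where $k$ depends only on $n$ and $R$. For any $g\in G$, left invariance gives $B(g,n)=g\cdot B(e,n)$ and $g\cdot B(x_i,R)=B(gx_i,R)$, so
\[
B(g,n)\;=\;g\cdot B(e,n)\;\subseteq\;\bigcup_{i=1}^{k} g\cdot B(x_i,R)\;=\;\bigcup_{i=1}^{k} B(gx_i,R),
\]
a union of $k$ elements of $\mathcal{U}$. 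Applied to each $V\in\mathcal{V}$ with its associated center $g_V$, this gives $V\subseteq B(g_V,n)\subseteq \bigcup_{i=1}^{k} B(g_V x_i,R)$, so setting $k(\mathcal{V})=k$ verifies the hypothesis of Theorem 2.2.

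The main obstacle is conceptual rather than technical: one must recognize that properness and left invariance together provide uniformity in the center $g$, a feature that would fail in a general metric space. Once we see that compactness gives a single $k$ at the identity and that left translation transports the covering to every other ball of radius $n$, the rest is a routine application of Theorem 2.2.
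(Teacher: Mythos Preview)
Your proof is correct and follows essentially the same route as the paper: reduce to Theorem 2.2, refine $\mathcal{V}$ by a cover of $n$-balls, use properness to extract a finite subcover of one closed $n$-ball by $R$-balls, and then left-translate that finite cover to every other $n$-ball. The only cosmetic difference is that you anchor the argument at the identity $e$ and translate by $g$, whereas the paper fixes an arbitrary ball first and translates by $hg^{-1}$ between centers.
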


\begin{proof}
Let $R>0$ and let $\UU$ be the cover by open $R$ balls. Let $\VV$ be any other scale in the large scale structure. Since our large scale structure is induced from a metric, there exists $S>0$ such that the scale consisting of $S$ balls coarsens $\VV$. We will call this scale $\WW$. With the topology induced by the metric in mind, let $W \in \WW$ be a specific element and consider the topological closure of $W$. This is a compact set due to the metric being proper and as $\UU$ is an open cover, there is a finite subcollection $U_1,...,U_k\in\UU$ so that $W\subseteq\bigcup\limits_{i=1}^{k}U_i$. Now, let $W'$ be any other element of $\WW$. Let $g$ and $h$ be the points of the centers of $W$ and $W'$ respectively. Then, one can obtain a cover of $W'$ by translating the centers of $U_1,...,U_k$ by the element $hg^{-1}$.
%Indeed, let $U'_i$ be the $\vec{AB}$ translates of the $U_i$ and $w'\in W'$. Suppose for contradiction that $w'$ is not contained in any of the $U'_i$'s. Then the vector $\overrightarrow{Bw'}$ has length less than $S$. Subtract $\overrightarrow{Bw'}$ by the vector $\vec{AB}$ yields a vector whose terminal point lies within $W$. Since $w'$ is not contained within any of the $U'_i$, then this point within $W$ is not contained within any of the $U_i$'s by construction.
This implies that every $W'\in\WW$ can be covered by $k$ elements of $\UU$. Namely, the balls $hg^{-1}\cdot U_1,...,hg^{-1}\cdot U_k$. These are elements of $\UU$ due to left-invariance. Hence any element of $\VV$ can be covered by at most $k$ elements of $\UU$ and therefore $(G, \mathcal{LSS})$ has bounded scale measure for the scale $\UU$ of $R$ balls by theorem 2.2.
\end{proof}

From the proposition and the remark, we have the following result.

\begin{Corollary}
Let $\mathds{R}^n$ have the large scale structure induced by the Euclidean metric. Then $(\mathds{R}^n,\mathcal{LSS})$ has bounded scale measure at scale $\UU$ where $\UU$ is any cover of balls with radius $R>0$.
\end{Corollary}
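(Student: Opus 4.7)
The plan is to deduce this corollary directly from the preceding Proposition by verifying that $\mathds{R}^n$ equipped with the Euclidean metric meets its hypotheses. First I would note that $\mathds{R}^n$ is an abelian group under addition, so that Definition 2.6 applies to it at all. Second, the Euclidean metric $d(x,y)=\|x-y\|$ is left-invariant (indeed bi-invariant): for any $g,h,k\in\mathds{R}^n$,
\[
d(k+g,k+h)=\|(k+g)-(k+h)\|=\|g-h\|=d(g,h).
\]
Third, the metric is proper, since by the Heine--Borel theorem the topological closure of any bounded subset of $\mathds{R}^n$ is closed and bounded, hence compact. These three observations are precisely what the Remark preceding the Proposition records, so in practice this step is just bookkeeping.

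Once the hypotheses have been verified, I would invoke the preceding Proposition with $G=\mathds{R}^n$ and immediately conclude that for every $R>0$ the scale $\UU$ of open $R$-balls witnesses bounded scale measure of $(\mathds{R}^n,\mathcal{LSS})$. There is no real obstacle here: the corollary is packaged as the specialization of the general result to the most familiar example, and all nontrivial geometric content---selecting a finite subcover of the closure of a single $R$-ball and translating that subcover by $hg^{-1}$ to cover an arbitrary ball---is absorbed into the proof of the Proposition via left-invariance and properness.
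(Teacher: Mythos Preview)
Your proposal is correct and is exactly the paper's own approach: the paper simply writes ``From the proposition and the remark, we have the following result,'' and your argument just spells out the content of that remark (that the Euclidean metric on $\mathds{R}^n$ is proper and left-invariant) before invoking the proposition.
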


A question we have is whether or not translation invariance is needed. This is to ask if any proper metric space has bounded scale measure at any cover of balls of radius $R>0$.

\begin{Question}
Let $M$ be a proper metric space and $\mathcal{LSS}$ be the large scale structure induced by the metric. Does $(M,\mathcal{LSS})$ have bounded scale measure for any cover of $R$ balls of radius larger than zero?
\end{Question}

The following is an application of theorem 2.6.

\begin{Example}
Let $\mathds{R}$ have the large scale structure induced from the metric of absolute value. This will follow because it's known that $\mathds{Z}$ with the absolute value metric is coarsely equivalent to $\mathds{R}$. Since $\mathds{Z}$ with the large scale structure from the absolute value metric has bounded geometry (i.e. has bounded scale measure at the scale of the cover of one point sets), then $\mathds{R}$ with the large scale structure induced from the absolute value metric also has bounded scale measure. It's important to note here that while $\mathds{R}$ does not have bounded geometry, it has bounded scale measure by the above corollary. As shown earlier in the paper, bounded geometry is not in general preserved by coarse equivalences. However, bounded scale measure is preserved by coarse equivalences.
\end{Example}

Here is another example of a large scale space having bounded scale measure.

\begin{Example}
Let $\mathds{R}$ have the large scale structure $\mathcal{LSS}$ induced by the discrete metric. Then $(\mathds{R},\mathcal{LSS})$ has bounded scale measure for any cover by open balls of radius $R\geq 1$. Note that any ball of radius greater than or equal to 1 is all of $\mathds{R}$. However, if $\UU\in\mathcal{LSS}$ is a cover by balls of radius less than 1, then $\UU$ is a cover of single points. By choosing $\mathcal{V}$ to be the cover by balls of radius 1, then each element of $\VV$ is all of $\mathds{R}$ hence no element of $\VV$ can be covered by a finite amount of elements of $\UU$.
\end{Example}

We present an example of a space that does not have bounded scale measure at any scale.

\begin{Example}
Consider the space $X=\mathds{Z}\times\mathds{Z}^2\times\mathds{Z}^3\times ...$. An element of $X$ will be denoted as $\vec{x}=(x_1, (x_{2, 1}, x_{2, 2}), (x_{3, 1}, x_{3, 2}, x_{3, 3}),...)$. Consider the metric $d_i$ of the $\mathbb{Z}^i$ component defined as $d_i(\vec{x}_i,\vec{y}_i)=\max\limits_{1\leq k\leq i} |x_{i, k}-y_{i, k}|$. Note here that balls with such a metric look like squares of appropriate dimension. One can then construct an $\infty$-metric $d$ on $X$ defined as $$d(\vec{x},\vec{y})=\sum\limits_{i=1}^{\infty} d_i(\vec{x}_i,\vec{y}_i)$$  Consider the large scale structure induced by this $\infty$-metric. Since any metrizable large scale structure is countably generated by balls of radius $0,1,2,..$ we only need to show that any cover by radius $n$ balls fails to cover all elements of some scale $\VV$ with a finite union of $n$ balls and will thus fail to have bounded scale measure by theorem 2.2.

Let $\UU$ be a cover by $n$ balls where $n$ is a non-negative integer. Let $\VV$ be the cover of $n+1$ balls. We will show that for all natural numbers $k$ there does not exist a collection $U_i \in\UU$ with $V\subseteq \bigcup\limits_{i=1}^{k}U_i$ for some $V \in \VV$.  In the case of $n=0$, this happens because a 1-ball in each component contains an increasingly large number of points which implies that a $1$-ball in $X$ contains infinitely many points $\vec{x}$.

In the case of $n>0$ this is because each element of $\VV$ has at most $(n+1)^i$ elements in its $\mathbb{Z}^i$ component, each element of $\UU$ has at most $n^i$ elements in its $\mathbb{Z}^i$ component, and the number $(n+1)^i-n^i$ is unbounded as $i$ gets large. Thus the cover $\VV$ can't be covered by a finite number of elements of $\UU$.
\end{Example}

\section{Generalizations of Property A}
Property A was defined by Guoliang Yu \cite{yu2000} to approach the Baum-Connes conjecture. It can be viewed as a weaker version of amenability and is a sufficient condition for many properties such as coarse embeddability into a Hilbert Space. In its original context, property A was defined for a uniformly discrete metric space in the following way. 
\begin{Definition}
Let $X$ be a uniformly discrete metric space. $X$ has \textbf{property A} if for every $\epsilon >0$ and $R>0$ there exists a collection of finite subsets $\{A_x\}_{x\in X}, A_x \subseteq X \times \mathds{N}$ for every $x\in X$, and a constant $S>0$ such that
\begin{itemize}
\item $\frac{|A_x \triangle A_y|}{|A_x \cap A_y|}\leq \epsilon$ when $d(x,y)\leq R$
\item $A_x \subseteq B(x,S)\times \mathds{N}$
\end{itemize}
\end{Definition}
These conditions make the sets $A_x$ and $A_y$ be almost equal if $d(x,y) \leq R$ and disjoint if $d(x,y)\geq 2S$. From there, Hiroki Sako defined property A in \cite{Sako} for uniformly locally finite coarse spaces. Our definition is inspired by \cite{Sako}:

\begin{Definition} 
Let $X$ be a set and let $\mathcal{LSS}$ be a Large Scale structure on $X$ with bounded geometry. ($X, \mathcal{LSS}$) is said to have \textbf{Property A} if for every $\epsilon >0$ and every uniformly bounded family $\mathcal{U} \in \mathcal{LSS}$, there exists $\mathcal{V} \in \mathcal{LSS}$ and a family of finite subsets $\{A_x^{\mathcal{LSS}}\}$ of $X\times \mathds{N}$ such that
\begin{itemize}
\item $A_x^{\mathcal{LSS}} \subset st(x,\mathcal{V})\times \mathds{N}$
\item $(x,1) \in A_x^{\mathcal{LSS}}$
\item $|A_x^{\mathcal{LSS}} \Delta A_y^{\mathcal{LSS}}| < \epsilon |A_x^{\mathcal{SLS}} \cap A_y^{\mathcal{LSS}}|$ if $y \in st(x,\mathcal{U})$.
\end{itemize}
\end{Definition}

It can be shown that our definition of property A for large scale spaces with bounded geometry is ``equivalent'' in some sense to Sako's definition of property A for uniformly locally finite coarse spaces. We will state the theorem here. The interested reader can read a refresher on coarse spaces and the proof of the following theorem in the appendix at the end of this paper:

\begin{Theorem}
Let $(X,\mathcal{LSS})$ be a large scale space with bounded geometry. If a uniformly locally finite coarse space $(X,\mathcal{C})$ has property A, then the induced large scale structure ($X, \mathcal{LSS}$) has Property A. Further, if ($X, \mathcal{LSS}$) has property A, then the induced coarse structure $(X,\mathcal{C})$ also has property A.
\end{Theorem}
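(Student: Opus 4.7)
The plan is to exploit the standard dictionary between the entourages (controlled sets) of a coarse structure and the uniformly bounded covers of its associated large scale structure. Under this dictionary, an entourage $E$ corresponds to the cover $\mathcal{U}_E = \{E[x]\}_{x \in X}$ of sections, while a uniformly bounded cover $\mathcal{U}$ corresponds to the entourage $E_\mathcal{U} = \bigcup_{U \in \mathcal{U}} U \times U$. The key observation that drives both directions is that $y \in \st(x,\mathcal{U}_E)$ and $(x,y) \in E$ are interchangeable conditions (up to coarsening) once $E$ is assumed diagonal-containing and symmetric, so the quantifiers in the third bullet of each definition of property A match up cleanly.

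For the forward direction, assume $(X,\mathcal{C})$ has Sako's property A. Given $\epsilon > 0$ and $\mathcal{U} \in \mathcal{LSS}$, pass to the entourage $E_\mathcal{U} \in \mathcal{C}$. Applying Sako's property A to the pair $(\epsilon, E_\mathcal{U})$ produces an entourage $S$ and finite subsets $\{A_x\}$ of $X \times \mathds{N}$ with $A_x \subset S[x] \times \mathds{N}$, $(x,1) \in A_x$, and the norm comparison whenever $(x,y) \in E_\mathcal{U}$. Take $\mathcal{V} = \mathcal{U}_S$. Since entourages contain the diagonal, $x \in S[x]$, hence $S[x] \subseteq \st(x,\mathcal{V})$ and $A_x \subset \st(x,\mathcal{V}) \times \mathds{N}$. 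If $y \in \st(x,\mathcal{U})$, pick $U \in \mathcal{U}$ with $x,y \in U$; then $(x,y) \in U \times U \subseteq E_\mathcal{U}$, and Sako's third bullet yields $|A_x \triangle A_y| < \epsilon |A_x \cap A_y|$, exactly as required.

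For the reverse direction, assume $(X,\mathcal{LSS})$ has the paper's property A. Given $\epsilon > 0$ and $E \in \mathcal{C}$, form the cover $\mathcal{U}_E$ (replacing $E$ by its symmetric, reflexive enlargement if necessary); by the induced-structure correspondence this is a uniformly bounded cover. Apply the large scale version of property A to $(\epsilon, \mathcal{U}_E)$ to produce $\mathcal{V} \in \mathcal{LSS}$ and sets $\{A_x\}$. Set $S = E_\mathcal{V} \in \mathcal{C}$. If $y \in \st(x,\mathcal{V})$, there is $V \in \mathcal{V}$ containing both $x$ and $y$, so $(x,y) \in V \times V \subseteq S$, giving $\st(x,\mathcal{V}) \subseteq S[x]$ and $A_x \subset S[x] \times \mathds{N}$. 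Finally, whenever $(x,y) \in E$, both $x$ and $y$ lie in $E[x] \in \mathcal{U}_E$, so $y \in \st(x,\mathcal{U}_E)$, and the large scale condition yields the required inequality, verifying Sako's definition.

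The main obstacle is bookkeeping rather than anything conceptually deep: one must verify that the cover-to-entourage passage genuinely sends uniformly bounded covers of $\mathcal{LSS}$ to controlled sets of the induced $\mathcal{C}$ and conversely, and that bounded geometry of $(X,\mathcal{LSS})$ really coincides with uniform local finiteness of $(X,\mathcal{C})$, so that the finite sets $A_x$ and the sections $S[x]$ behave as expected under the translation. Once this is laid out (which is what the appendix on coarse spaces is for), the three bullets of each definition transfer almost mechanically via the compatibility observations above.
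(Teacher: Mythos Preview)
Your proposal is correct and follows essentially the same dictionary-based approach as the paper's appendix proof: translate a given cover to an entourage (or vice versa) via $E_\mathcal{U}=\bigcup_{U\in\mathcal{U}}U\times U$ and $\mathcal{U}_E=\{E[x]\}_{x\in X}$, keep the same family $\{A_x\}$, and check that the three bullets transfer. The only cosmetic difference is that in the $\mathcal{C}\Rightarrow\mathcal{LSS}$ direction the paper builds $\mathcal{V}$ as the family of $X$-projections of the $A_x$'s and then shows it is uniformly bounded via $S^{-1}\circ S$, whereas you take $\mathcal{V}=\{S[x]\}_{x\in X}$ directly; both choices work for the same reason and neither constitutes a genuinely different argument.
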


Because of this theorem, we will drop the superscripts from the collections $\{A_x\}_{x\in X}$. We now show that our definition of property A is a coarse invariant.

\begin{Theorem}
Let $(X,\mathcal{LSS}_X)$ and $(Y,\mathcal{LSS}_Y)$ be large scale spaces with bounded geometry. Let $f:X \rightarrow Y$ be a coarse equivalence. If $(Y, \mathcal{LSS}_Y)$ has property A, then $(X, \mathcal{LSS}_X)$ also has property A.
\end{Theorem}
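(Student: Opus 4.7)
The plan is to pull back the property A structure from $Y$ to $X$ using a coarse inverse of $f$. Fix a coarse inverse $g : Y \to X$ together with scales $\mathcal{E}_X \in \mathcal{LSS}_X$ and $\mathcal{E}_Y \in \mathcal{LSS}_Y$ witnessing that $g \circ f$ is close to $\mathrm{id}_X$ and $f \circ g$ is close to $\mathrm{id}_Y$. A preliminary observation is that $g$ has uniformly bounded point-preimages: if $y \in g^{-1}(x)$ then $f(g(y)) = f(x)$ is $\mathcal{E}_Y$-close to $y$, so $g^{-1}(x) \subseteq \st(f(x), \mathcal{E}_Y)$, and bounded geometry of $Y$ applied to the star of $\mathcal{E}_Y$ (extended by singletons, then squared via the star axiom) forces $|g^{-1}(x)| \leq K$ for some constant $K$. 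Fix enumerations $\sigma_x : g^{-1}(x) \to \{1,\ldots,K\}$ and define an injection $\phi : Y \times \mathds{N} \to X \times \mathds{N}$ by $\phi(y,n) = (g(y),\, Kn + \sigma_{g(y)}(y))$; its image lies in second coordinates $\geq K+1$, so the slot $(x,1)$ is reserved.

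Next, given $\epsilon > 0$ and $\UU_X \in \mathcal{LSS}_X$, apply property A on $Y$ with tolerance $\epsilon' := \epsilon/2$ and scale $\UU_Y := f(\UU_X) \in \mathcal{LSS}_Y$ to obtain $\VV_Y \in \mathcal{LSS}_Y$ and finite sets $\{B_y\}_{y \in Y}$ satisfying the three conditions. Choose $N \geq 4/\epsilon$ and inflate via $B'_y := \{(z, (n-1)N + k) : (z,n) \in B_y,\ 1 \leq k \leq N\}$. Then $(y,1) \in B'_y$, the $Y$-support and the ratio $|B'_y \Delta B'_{y'}|/|B'_y \cap B'_{y'}|$ are preserved, and $|B'_y \cap B'_{y'}| \geq N$ whenever $y' \in \st(y, \UU_Y)$, since property A forces $|B_y \cap B_{y'}| \geq 1$ in that situation.

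Finally, set $A_x := \{(x,1)\} \cup \phi(B'_{f(x)})$. The reserved slot ensures this union is disjoint and that $(x,1) \in A_x$. For the support condition, if $(z,m) \in \phi(B'_{f(x)})$ then $z = g(y)$ for some $y \in \st(f(x), \VV_Y)$, and combining closeness of $g \circ f$ to $\mathrm{id}_X$ with large scale continuity of $g$ yields $A_x \subseteq \st(x, \VV_X) \times \mathds{N}$ for $\VV_X := \st(\mathcal{E}_X, g(\VV_Y)) \in \mathcal{LSS}_X$. For the ratio condition, suppose $x \neq x'$ with $x' \in \st(x, \UU_X)$, so $f(x') \in \st(f(x), \UU_Y)$. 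Injectivity of $\phi$ combined with the reserved $(\cdot,1)$-slot gives $|A_x \cap A_{x'}| = |B'_{f(x)} \cap B'_{f(x')}|$ and $|A_x \Delta A_{x'}| = |B'_{f(x)} \Delta B'_{f(x')}| + 2 < \epsilon' |A_x \cap A_{x'}| + 2 \leq (\epsilon' + 2/N)|A_x \cap A_{x'}| \leq \epsilon |A_x \cap A_{x'}|$.

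The main obstacle is the mismatch $(x,1) \neq (g(f(x)),1)$ caused by $g \circ f$ merely being close to rather than equal to the identity. Hand-inserting $(x,1)$ costs an additive $2$ in each symmetric difference, which is only absorbable when intersections are uniformly large; this is precisely what the inflation factor $N$ purchases. A secondary technical nuisance is that $g$ is generally non-injective and would collapse cardinalities when pushing $B'_{f(x)}$ into $X \times \mathds{N}$; this is handled by the injection $\phi$, whose construction relies on the uniform preimage bound $K$ coming from bounded geometry of $Y$.
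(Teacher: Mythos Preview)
Your proof is correct but follows a genuinely different route from the paper's. The paper pulls back through $f$ directly: it sets $A_x := \{(z,n)\in X\times\mathds{N} : (f(z),n)\in B_{f(x)}\}$, having applied property~A on $Y$ to the scale $f(\UU_X)$ with tolerance $\epsilon/N$, where $N:=\sup_{y\in Y}|f^{-1}(y)|$ is finite by the coarse-embedding property of $f$ together with bounded geometry of $X$. The estimates $|A_x\triangle A_{x'}|\le N\,|B_{f(x)}\triangle B_{f(x')}|$ and $|A_x\cap A_{x'}|\ge |B_{f(x)}\cap B_{f(x')}|$ then yield the ratio bound in one line, with no correction terms. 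You instead route through the coarse inverse $g$: you encode $Y\times\mathds{N}$ injectively into $X\times\mathds{N}$ via $\phi$, set $A_x=\{(x,1)\}\cup\phi(B'_{f(x)})$, and inflate the $B$-sets by a factor $N\ge 4/\epsilon$ to absorb the additive $+2$ caused by hand-inserting $(x,1)$. The paper's argument leans on bounded geometry of $X$ (to bound fibres of $f$), while yours leans on bounded geometry of $Y$ (to bound fibres of $g$). The paper's construction is shorter and avoids the inflation machinery; on the other hand, your use of the injection $\phi$ makes every cardinality comparison an exact equality, so you never rely on points in the $Y$-support of $B_{f(x)}$ actually lying in $f(X)$, a point the paper's inequality $|A_x\cap A_{x'}|\ge |B_{f(x)}\cap B_{f(x')}|$ tacitly needs.
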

\begin{proof}
Let $\mathcal{U}$ be a uniformly bounded family of $\mathcal{LSS}_X$ and let $\epsilon >0$ be given. Let $N=sup_{y\in Y}|f^{-1}(y)|$. Let $\mathcal{W}$ be the uniformly bounded family of $\mathcal{LSS}_Y$ and $\{B_x\}$ the finite subsets of $Y \times \mathds{N}$ from the definition of property A for $f(\mathcal{U})$ and $\frac{\epsilon}{N}$. Finally, define $\mathcal{V}=f^{-1}(\mathcal{W})$ and for each $x\in X$ define $$A_x =\{(z,n)\in X \times \mathds{N} | (f(z),n)\in B_{f(x)}\}.$$
By definition of $\VV$, it holds trivially that $A_x \subset st(x,\mathcal{V}) \times \mathds{N}$ and that $(x,1)\in A_x$. \\
Finally, we check that if $y\in st(x,\mathcal{U})$ then $|A_x \triangle A_y| < \epsilon |A_x \cap A_y|$. If $y\in st(x,\mathcal{U})$, then $f(y) \in st(f(x),f(\mathcal{U}))$ and 
$$|B_{f(x)}\triangle B_{f(y)}| < \frac{\epsilon}{N} |B_{f(x)}\cap B_{f(y)}|.$$
From the construction of $A_x$'s and bounded geometry it follows that $|A_x\triangle A_y|\leq N \cdot |B_{f(x)}\triangle B_{f(y)}|$ and as $f$ is a function
$|A_x \cap A_y| \geq |B_{f(x)}\cap B_{f(y)}|$. Putting this all together we get
$$|A_x\triangle A_y|\leq N \cdot |B_{f(x)}\triangle B_{f(y)}| \leq N\cdot \frac{\epsilon}{N} |B_{f(x)}\cap B_{f(y)}| \leq \epsilon |A_x \cap A_y|,$$
and therefore $(X,\mathcal{LSS}_X)$ has property A.
\end{proof}

The large scale definition of property A allows us to approach relationships with other coarse properties in a more topological way. As such, we will use this definition to show that finite asymptotic dimension implies the large scale definition of property A.\\
Asymptotic Dimension can be thought of as a large scale geometric version of the covering dimension. One of the first useful definitions involved the concept of $R$-multiplicity of a scale on a metric space.

\begin{Definition}
For a scale $\mathcal{U}$, the \textbf{R-multiplicity} of $\mathcal{U}$ on a metric space is the smallest positive integer $n$ such that for every $x\in X$ the ball $B(x,R)$ intersects at most $n$ elements of $\mathcal{U}$.
\end{Definition}

This in turn inspired the following definition from \cite{Dydak}:
\begin{Definition}
Suppose that $X$ is a metric space. The \textbf{asymptotic dimension} of $X$ is the smallest positive integer $n$ such that for every $R>0$ there exists a uniformly bounded cover $\mathcal{U}$ with $R$-multiplicity $n+1$. It is denoted by $asdim(X) =n$.
\end{Definition}
From there, asymptotic dimension was generalized for large scale structures in \cite{Dydak}.

\begin{Definition}
Let $(X,\mathcal{LSS})$ be a large scale space with large scale structure $\mathcal{LSS}$. We say $asdim(X,\mathcal{LSS}) \leq n$ if for every uniformly bounded cover $\mathcal{U}$ in $\mathcal{LSS}$ there exists a uniformly bounded cover $\mathcal{V}$ such that $\mathcal{V}$ is a coarsening of $\mathcal{U}$ with multiplicity at most $n+1$ (i.e. each point $x\in X$ is contained in at most $n+1$ elements of $\mathcal{V}$).
\end{Definition}
 Since the trivial cover of $X$ is uniformly bounded, this definition gives us that there exists a uniformly bounded cover $\mathcal{B}$ such that each $x \in X$ is contained in at most $n+1$ elements of $\mathcal{B}$. It can be shown that $asdim(X, \mathcal{LSS})\leq n$ if $\mathcal{LSS}$ can be generated by a uniformly bounded family $\mathcal{B}$ such that the multiplicity of $\mathcal{B}$ is at most $n+1$ \cite{Dydak}.
%In \cite{simpcomp} a different characterization of asymptotic dimension was given.
%\begin{Theorem}
%The following conditions are equivalent for any large scale space $(X\mathcal{LSS})$ and any integer $N \geq 0$:
%\begin{enumerate}
%\item $asdim(X) \leq n$
%\item for every $\epsilon >0$ and every uniformly bounded cover $\mathcal{U}$ of $\mathcal{LSS}$ there is a $(\mathcal{U},\epsilon)$-partition of unity $f:X \rightarrow K^{(n)}$.
%\item for every uniformly bounded cover $\mathcal{U}$ of $\mathcal{LSS}$ there is a $(\mathcal{U},\infty)$-partition of unity $f:X \rightarrow K^{(n)}$.
%\end{enumerate}
%\end{Theorem}
As for the metric case, we will show that finite asymptotic dimension for large scale spaces implies the general large scale definition of property A. First we will introduce some notation and a lemma.\\

\begin{Notation}
Given a uniformly bounded cover $\mathcal{U}$ for all positive integers $n$, define $st^{n}(\mathcal{U})$ in the following way:\\
$st^{0}(\mathcal{U}) = \mathcal{U}$ \\
$st^{n}(\mathcal{U})=st(\mathcal{U},st^{n-1}(\mathcal{U}))$
\end{Notation}

\begin{Lemma}
If $y\in st(x,\mathcal{U})$, then $st(y,st^{n-1}(\mathcal{U})) \subseteq st(x,st^{n}(\mathcal{U})).$
\end{Lemma}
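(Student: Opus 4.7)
The plan is a direct set-theoretic argument. The strategy is to fix an arbitrary $z \in \st(y, \st^{n-1}(\mathcal{U}))$ and exhibit a single element of $\st^n(\mathcal{U})$ that contains both $x$ and $z$; this will place $z$ in $\st(x, \st^n(\mathcal{U}))$, and since $z$ was arbitrary, deliver the claimed inclusion.

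Since $y \in \st(x, \mathcal{U})$, I may fix some $U_0 \in \mathcal{U}$ with $x, y \in U_0$. The natural candidate is
\[
W^{*} := \st\bigl(U_0, \st^{n-1}(\mathcal{U})\bigr),
\]
which is by construction an element of $\st^n(\mathcal{U}) = \st(\mathcal{U}, \st^{n-1}(\mathcal{U}))$. To verify $z \in W^{*}$, I unpack $z \in \st(y, \st^{n-1}(\mathcal{U}))$ to obtain some $W \in \st^{n-1}(\mathcal{U})$ with $y, z \in W$; then $y \in W \cap U_0$, so $W \cap U_0 \neq \varnothing$, hence $W \subseteq W^{*}$ and therefore $z \in W^{*}$.

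To verify $x \in W^{*}$, I need an element of $\st^{n-1}(\mathcal{U})$ that contains $x$ and intersects $U_0$; any element containing $x$ will automatically intersect $U_0$, because $x \in U_0$. Producing such an element is the one point that deserves care: it uses the fact that $\st^{n-1}(\mathcal{U})$ is itself a cover of $X$. This I would establish by a short induction on $k$, showing that if $\mathcal{U}$ is a scale of $X$ then so is $\st^k(\mathcal{U})$; the inductive step uses that $\st(U, \st^{k-1}(\mathcal{U})) \supseteq U$ whenever $\st^{k-1}(\mathcal{U})$ covers $U$, which is immediate from the cover property.

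Once that cover-preservation observation is in hand, choose $W' \in \st^{n-1}(\mathcal{U})$ with $x \in W'$; then $x \in W' \cap U_0$, so $W' \subseteq W^{*}$ and $x \in W^{*}$ as required. Combining the two inclusions gives $W^{*} \in \st^n(\mathcal{U})$ with $x, z \in W^{*}$, which is exactly $z \in \st(x, \st^n(\mathcal{U}))$. I expect the inductive verification that iterated stars of a cover remain covers to be the only substantive step; everything else is a pure unpacking of the definition of $\st$.
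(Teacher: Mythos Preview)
Your argument is correct and follows essentially the same route as the paper: pick $U_0\in\mathcal{U}$ containing $x$ and $y$, and show that $\st(U_0,\st^{n-1}(\mathcal{U}))\in\st^n(\mathcal{U})$ contains both $x$ and $z$. Your version is in fact slightly cleaner, since you treat the witness $W\in\st^{n-1}(\mathcal{U})$ abstractly (the paper unpacks it as $\st(U,\st^{n-2}(\mathcal{U}))$, which tacitly requires $n\ge 2$) and you make explicit the cover-preservation of iterated stars that the paper uses without comment.
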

\begin{proof}
Let $z\in st(y, st^{n-1}(\mathcal{U}))$ be arbitrarily chosen. Then there exists $U\in \mathcal{U}$ such that $z,y \in st(U, st^{n-2}(\mathcal{U}))$. Since $y\in st(x, \mathcal{U})$ there exists $U'\in \mathcal{U}$ with $x,y \in U'$. Therefore we have 
$$U' \cap st(U, st^{n-2}(\mathcal{U})) \neq \emptyset$$
and thus by the definition of stars and the above we have $z\in st(U, st^{n-2}(\mathcal{U})) \subset st(U', st(\mathcal{U}, st^{n-2}(\mathcal{U}))).$ 
Putting this all together we then have 
$$x,z \in st(U',st(\mathcal{U},st^{n-2}(\mathcal{U})) = st(U', st^{n-1}(\mathcal{U})) \subseteq st^{n}(\mathcal{U})$$
which shows $z \in st(x, st^{n}(\mathcal{U}))$ and therefore $st(y,st^{n-1}(\mathcal{U})) \subseteq st(x,st^{n}(\mathcal{U})).$
\end{proof}
\begin{Theorem}
Let $(X, \mathcal{LSS})$ be a large scale space. If $asdim (X,\mathcal{LSS})$ is finite, then $(X, \mathcal{LSS})$ has property A.
\end{Theorem}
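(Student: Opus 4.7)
The plan is to leverage the hypothesis $asdim(X,\mathcal{LSS})\leq n$ to build, for each uniformly bounded $\mathcal{U}$ and each $\epsilon > 0$, a tower of progressively coarser uniformly bounded covers $\mathcal{V}_1,\mathcal{V}_2,\dots,\mathcal{V}_N$ of $X$, each of multiplicity at most $n+1$. The sets $A_x \subseteq X \times \mathds{N}$ will then be assembled level-by-level from this tower, with the multiplicity bound at each level providing the cancellation needed for the symmetric-difference estimate.

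First, I would choose an integer $N$ large relative to $n$ and $1/\epsilon$, set $\mathcal{V}_0 = \mathcal{U}$, and inductively apply $asdim(X,\mathcal{LSS}) \leq n$ to the uniformly bounded family $\st(\mathcal{V}_{k-1},\mathcal{U}) \in \mathcal{LSS}$ to produce a uniformly bounded cover $\mathcal{V}_k$ of multiplicity $\leq n+1$ coarsening it. Axiom (1) of a large scale structure guarantees $\mathcal{V} := \bigcup_{k=0}^N \mathcal{V}_k \in \mathcal{LSS}$. Next I would fix an injection $\iota:\bigsqcup_{k=1}^N \mathcal{V}_k \to \mathds{N}\setminus\{1\}$ and define
\[
A_x := \{(x,1)\} \cup \bigcup_{k=1}^N \bigcup_{W\in\mathcal{V}_k,\, x\in W} W\times\{\iota(W)\}.
\]
Bounded geometry makes each $W$ finite, and the multiplicity bound forces at most $n+1$ sets $W$ per level to contribute, so $A_x$ is finite. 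The first two bullets of property~A follow immediately: $(x,1)\in A_x$ by construction, and $A_x \subseteq \st(x,\mathcal{V})\times\mathds{N}$ since every contributing $W$ contains $x$.

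To verify the third bullet, I would fix $y \in \st(x,\mathcal{U})$, choose $U \in \mathcal{U}$ with $x,y \in U$, and note that at each level $k\geq 1$ the coarsening property yields some $W_k \in \mathcal{V}_k$ with $\st(U,\mathcal{U}) \subseteq W_k$, so $W_k \in \mathcal{V}_k(x) \cap \mathcal{V}_k(y)$. The crucial observation is that any $W \in \mathcal{V}_k(x) \setminus \mathcal{V}_k(y)$ must be a ``boundary'' element failing $\st(x,\mathcal{U}) \subseteq W$; these are controlled by the multiplicity bound and the cascading successor structure, leading (after weighting by the sizes $|W|$) to an estimate of the form $|A_x \triangle A_y|/|A_x \cap A_y| \leq C(n)/N$ with $C(n)$ depending only on $n$. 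Choosing $N > C(n)/\epsilon$ then yields property~A.

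The hard part will be this final estimate: the naïve multiplicity bound gives only $|\mathcal{V}_k(x) \triangle \mathcal{V}_k(y)| \leq 2(n+1)$ per level, which does not decay in $k$, so extracting the $1/N$ factor requires showing boundary elements are ``eaten up'' by the coarsening, with $\mathcal{V}_k(x) = \mathcal{V}_k(y)$ for all but $O_n(1)$ levels. A cleaner route I would likely take instead is to produce a probability measure $\mu_x \in \ell^1(X \times \mathds{N})$ from a partition of unity subordinate to the tower, establish $\|\mu_x - \mu_y\|_1 < \epsilon/2$ for $y \in \st(x,\mathcal{U})$, and then pass to finite sets $A_x$ via a standard Markov-inequality level-set discretization. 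This avoids most of the combinatorial bookkeeping while retaining the same geometric input.
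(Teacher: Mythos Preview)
Your first construction has the gap you yourself flag, and the repair you propose is not available: there is no reason for $\mathcal{V}_k(x)=\mathcal{V}_k(y)$ to hold for all but $O_n(1)$ levels.  Each $\mathcal{V}_k$ is a fresh cover, related to $\mathcal{V}_{k-1}$ only through the coarsening condition, so at \emph{every} level $k$ one can have elements of $\mathcal{V}_k$ containing $x$ but not $y$ and vice versa; the coarsening guarantees only that \emph{some} $W\in\mathcal{V}_k$ contains $\st(U,\mathcal{U})$, not that every $W$ through $x$ does.  With $A_x$ built from whole blocks $W\times\{\iota(W)\}$, both numerator and denominator grow with $N$ and their ratio need not decay.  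The partition-of-unity fallback is standard in the metric case but is not clearly available in an abstract large scale space: you have no distance function with which to manufacture Lipschitz bump functions, and a purely combinatorial indicator-function partition reproduces exactly the counting difficulty you are trying to escape.

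The paper sidesteps both problems by replacing your tower $\mathcal{V}_1,\dots,\mathcal{V}_N$ with a \emph{single} coarsening $\mathcal{V}$ of the one family $\st(\{x\}_{x\in X},\st^{n}(\mathcal{U}))$, picking a representative point $z_V\in V$ for each $V\in\mathcal{V}$, and defining
\[
A_x=\{(x,1)\}\cup\bigl\{(z_V,m):\ \st(x,\st^{m}(\mathcal{U}))\cap V\neq\emptyset,\ V\not\subset\st(x,\st^{m}(\mathcal{U})),\ 1\le m\le n\bigr\}.
\]
The crucial feature this buys is \emph{monotonicity in $m$}: for a fixed $V$ the sets $\st(x,\st^{m}(\mathcal{U}))$ increase with $m$, so the condition ``meets $V$ but does not engulf $V$'' holds on a (possibly empty) interval of values of $m$.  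Passing from $x$ to $y\in\st(x,\mathcal{U})$ shifts the endpoints of that interval by at most one (via the lemma $\st(y,\st^{m-1}(\mathcal{U}))\subseteq\st(x,\st^{m}(\mathcal{U}))$), so each $V$ contributes at most two elements to $A_x\triangle A_y$, whence $|A_x\triangle A_y|\le 4k+6$ with $k=\operatorname{asdim}(X)$.  Meanwhile the single $V$ containing all of $\st(x,\st^{n}(\mathcal{U}))$ contributes $(z_V,m)$ to both $A_x$ and $A_y$ for $1\le m\le n-1$, giving $|A_x\cap A_y|\ge n-1$.  Choosing $n$ large finishes the argument.  The idea missing from your sketch is precisely this monotone ``onset/engulfing'' structure, which a single cover probed at the nested scales $\st^{m}(\mathcal{U})$ provides and an independent tower of covers does not.
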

\begin{proof}
Let $asdim(X, \mathcal{LSS})=k$. Let $\epsilon > 0$ be given and $\mathcal{U}$ an arbitrary uniformly bounded family. Then for any $n\in \mathbb{N}$ we have that $st(\{x\}_{x\in X}, st^{n}(\mathcal{U}))$ is a uniformly bounded family. From the definition of finite asymptotic dimension, let $\mathcal{V}$ be a uniformly bounded family such that $st(\{x\}_{x\in X}, st^{n}(\mathcal{U})) \prec \mathcal{V}$ with multiplicity at most $k+1$. For each $V\in \mathcal{V}$ arbitrarily pick $z\in V$ and denote it $z_V$. Define a uniformly bounded family $\mathcal{W}=st(st(\{x\}_{x\in X}, st^{n}(\mathcal{U})), \mathcal{V})$. For each $x\in X$ define $A_x$ in the following way:
$$A_x:=\{(x,1)\} \cup \{(z_V,m) \mid st(x, st^{m}(\mathcal{U}))\cap V \neq \emptyset, V \nsubset st(x,st^{m}(\mathcal{U})) , 1\leq m\leq n\}.$$
Then each $A_x$ is finite due to the multiplicity of $\mathcal{V}$ against $st(x,st^{n}(\mathcal{U}))$ is at most $k+1$ for all $x\in X$. It also follows that $A_x \subseteq st(x, \mathcal{W})\times \mathds{N}$ and $(x,1)\in A_x$. All that remains is to check that the inequality in the definition of property A is satisfied.

Let $y\in st(x, \mathcal{U})$.
Since $st(\{x\}_{x\in X}, st^{n}(\mathcal{U})) \prec \mathcal{V}$, there exists $V\in \mathcal{V}$ with $st(x,st^{n}(\mathcal{U}))\subseteq V$. From the construction of $A_x$ for this fixed $V\in \mathcal{V}$ we have $(z_V,m)\in A_x$ for $1\leq m\leq n$. Additionally, we have $y\in V$ and from our lemma we have $(z_V, m)\in A_y$ for $1\leq m \leq n-1.$ Therefore, $|A_x \cap A_y| \geq n-1$. \\
On the other hand, consider $(z_V,l)\in A_x \triangle A_y$.
%Should be up to n try to fix it, but no big deal either way%
Without loss of generality suppose $(z_V,l)\in A_x \setminus A_y$. Therefore we know that $V\cap st(x, st^{l}(\mathcal{U}))\neq \emptyset$ and $V \not\subset st(x,st^{l}(\mathcal{U}))$. On the other hand, we know that either $V\cap st(y, st^{l}(\mathcal{U}))= \emptyset$ or $V \subset st(y,st^{l}(\mathcal{U}))$. \\
First suppose that $V\cap st(y, st^{l}(\mathcal{U}))= \emptyset$. From the previous lemma we have that $st(x, st^{l}(\mathcal{U})) \subset st(y, st^{l+1}(\mathcal{U}))$, so $V\cap st(y, st^{l+1}(\mathcal{U})) \neq \emptyset$. On the other hand, suppose $V \subset st(y,st^{l}(\mathcal{U}))$. Again, by the above lemma we have $V \subset st(y,st^{l}(\mathcal{U}))\subseteq st(x, st^{l+1}(\mathcal{U}))$. Therefore any $(z_V,l)$ can only be contained in $A_x \setminus A_y$ for at most 2 distinct values of $l$. Each $A_x$ can only have at most $k+1$ distinct $z_V$'s so $|A_x \triangle A_y| \leq 2(2(k+1)+1)$. Putting both inequalities together we get that
$$\frac{|A_x \triangle A_y|}{|A_x \cap A_y|} \leq \frac{4k+6}{n-1}.$$
The numerator is independent of our choice of $n$, so we may choose an $n$ great enough so that $\frac{4k+6}{n-1} < \epsilon$ to satisfy the inequality. Therefore the collection $\{A_x\}_{x\in X}$ satisfies the requirements and $(X, \mathcal{LSS})$ has property A.
\end{proof}

At this juncture, we seek to construct a definition of property A that does not depend on the large scale structure having bounded geometry. We finally consider a generalization of property A that occurs in spaces with bounded scale measure. 

If $(X,\mathcal{LSS})$ has bounded scale measure at scale $\mathcal{U}$, then it's useful to imagine the elements of $\mathcal{U}$ as ``points''. In our previous definition of property A, we needed our points in a specific $A_x$ to be ``near'' one another ($A_x \subset st(x,\mathcal{V})\times \mathds{N}$). To accomplish this property for our ``points'' $U\in\mathcal{U}$, we will make use of a set called a horizon:

\begin{Definition}
The \textbf{horizon} of a set against a scale is defined in the following way:
$$hor(A,\UU)=\left\{U_i ~|~ A\cap U_i \neq \emptyset, U_i \in \UU \right\}$$
\end{Definition}

We now introduce the definition of property A for a large scale space with bounded scale measure:

\begin{Definition}
Let $(X, \mathcal{LSS}_X)$ be a large scale space with bounded scale measure. We say that $(X,\mathcal{LSS})$ has \textbf{property A at scale} $\mathcal{U}$ if for all $\epsilon>0$ and for all $\mathcal{V} \in \mathcal{LSS}_X$ that is not the trivial cover, there exists a collection of finite subsets $\left\{A_U\right\}_{U\in \mathcal{U}}$ with $A_U \subseteq \mathcal{U} \times \mathds{N}$ for all $U\in \mathcal{U}$ and $\mathcal{W} \in \mathcal{LSS}_X$ such that:

\begin{description}

\item{1.)} For all $U \in \mathcal{U}, U\times\left\{1 \right\} \in A_U$

\item{2.)} $A_U \subseteq hor(st(U,\mathcal{W}), \mathcal{U})$

\item{3.)} $\frac{|A_{U_1} \Delta A_{U_2}|}{|A_{U_1} \cap A_{U_2}|}< \epsilon$ whenever $hor(U_1,\mathcal{V})\cap hor(U_2,\mathcal{V}) \neq \emptyset$
\end{description}
\end{Definition}
\begin{Remark}
We note here that while each element of $\left\{ A_U\right\}_{U\in\mathcal{U}}$ is a collection of finite subsets of $\mathcal{U}$, $\left\{ A_U\right\}_{U\in\mathcal{U}}$ itself may not contain finitely many points of $X$. \\

We also note here that if one has property A at scale $\mathcal{E}$ where $\mathcal{E}$ is the trivial cover, then we have the definition of property A that was discussed earlier in the paper. \\

Lastly, it's useful to think of having property A at scale $\mathcal{U}$ as having property A but consider the elements of $\UU$ as ''points" as opposed to elements of $X$. In doing this, we can contain complexity within elements of $\mathcal{U}$ and see if the the space has property A-like behavior at a larger scale.
\end{Remark}

We now show having property A at a scale is a coarse invariant.

\begin{Theorem}
Let $(X,\mathcal{LSS}_X)$ and $(Y,\mathcal{LSS}_Y)$ be large scale spaces with bounded scale measure. Let $f:X\rightarrow Y$ be a large scale equivalence. If $(Y, \mathcal {LSS}_Y)$ has property A at a scale, then $(X, \mathcal{LSS}_X)$ also has property A at a scale.
\end{Theorem}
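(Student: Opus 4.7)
The plan is to mimic the proof of Theorem 3.3 at the level of scales, using Theorem 2.6 to first transfer bounded scale measure from $Y$ to $X$. Suppose $(Y,\mathcal{LSS}_Y)$ has property A at scale $\mathcal{U}_Y$. By Theorem 2.6, $(X,\mathcal{LSS}_X)$ has bounded scale measure at scale $\mathcal{U}_X:=f^{-1}(\mathcal{U}_Y)$, and I aim to verify property A on $X$ at this scale.

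Given $\epsilon>0$ and a non-trivial $\mathcal{V}_X\in\mathcal{LSS}_X$, push forward to $\mathcal{V}_Y:=f(\mathcal{V}_X)\in\mathcal{LSS}_Y$ via bornologity. Apply property A on $Y$ at scale $\mathcal{U}_Y$ with parameter $\epsilon/N$ (where $N$ is a fiber bound explained below) and family $\mathcal{V}_Y$, producing $\mathcal{W}_Y\in\mathcal{LSS}_Y$ and finite sets $\{B_U\}_{U\in\mathcal{U}_Y}$. Fix a section $\sigma:\mathcal{U}_X\to\mathcal{U}_Y$ of the surjection $\pi:\mathcal{U}_Y\to\mathcal{U}_X$, $\pi(U):=f^{-1}(U)$, and define the pullback collection
$$A_{U'} := \{(\pi(U),n) : (U,n) \in B_{\sigma(U')}\} \subseteq \mathcal{U}_X \times \mathds{N},$$
together with $\mathcal{W}_X:=f^{-1}(\mathcal{W}_Y)$, possibly enlarged using a coarse-surjectivity witness $\mathcal{E}_Y$ to absorb points lying in $Y\setminus f(X)$.

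Conditions (1) and (2) of property A at scale follow directly: $(U',1)=(\pi(\sigma(U')),1)\in A_{U'}$ because $(\sigma(U'),1)\in B_{\sigma(U')}$, and the horizon inclusion on $Y$ propagates to $X$ through $f^{-1}$ after the $\mathcal{E}_Y$-enlargement, ensuring that each $(U,n)\in B_{\sigma(U')}$ yields a $\mathcal{U}_X$-element meeting $\st(U',\mathcal{W}_X)$ even when $U\cap\st(\sigma(U'),\mathcal{W}_Y)$ avoids $f(X)$. For condition (3), the push-forward argument from Theorem 2.6 shows that $hor(U'_1,\mathcal{V}_X)\cap hor(U'_2,\mathcal{V}_X)\neq\emptyset$ implies $hor(\sigma(U'_1),\mathcal{V}_Y)\cap hor(\sigma(U'_2),\mathcal{V}_Y)\neq\emptyset$ via the witness $f(V')\in\mathcal{V}_Y$ together with $f(U'_i)\subseteq\sigma(U'_i)$, activating the $B$-ratio inequality on the $Y$ side.

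The main obstacle is converting this $B$-ratio into the required $A$-ratio: $\pi$ need not be injective, because two elements of $\mathcal{U}_Y$ agreeing on $f(X)$ collapse under $\pi$. A direct count gives $|A_1\triangle A_2|\leq|B_1\triangle B_2|$ but only $|A_1\cap A_2|\geq|B_1\cap B_2|/N$, where $N$ is a uniform upper bound on the $\pi$-fiber cardinality restricted to the horizon involved. The delicate step is showing such an $N$ is finite: I would combine coarse surjectivity of $f$ (which forces $\pi^{-1}(U')$ for nonempty $U'$ to lie inside the $\mathcal{U}_Y$-horizon of a fixed point $f(x)\in U'$) with bounded scale measure on $Y$ applied to an appropriate scale built from $\mathcal{V}_Y$ and $\mathcal{W}_Y$. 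Once $N$ is controlled, the choice of $\epsilon/N$ in the application of property A on $Y$ absorbs the collapse and delivers the desired $\epsilon$-bound on $X$.
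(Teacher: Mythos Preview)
Your overall strategy has a genuine gap at exactly the point you flag as ``delicate'': the uniform bound $N$ on the fibers of $\pi:\mathcal U_Y\to\mathcal U_X$, $U\mapsto f^{-1}(U)$, cannot be extracted from bounded scale measure. For a nonempty $U'\in\mathcal U_X$, every $U\in\pi^{-1}(U')$ contains the point $f(x)$ for any fixed $x\in U'$, so indeed $\pi^{-1}(U')\subseteq hor(\{f(x)\},\mathcal U_Y)$. But bounded scale measure at $\mathcal U_Y$ only tells you that a set such as $\st(\{f(x)\},\mathcal U_Y)$ can be \emph{covered} by a bounded number of elements of $\mathcal U_Y$; it says nothing about how many elements of $\mathcal U_Y$ \emph{pass through} $f(x)$, or share a common trace on $f(X)$. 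Think of $\mathcal U_Y$ as the cover of $\mathbb R$ by all open $1$-balls: bounded scale measure holds, yet uncountably many elements of $\mathcal U_Y$ contain any given point, and uncountably many may land in the horizon of a fixed bounded set. Restricting to the finite sets $B_{\sigma(U')}$ does not rescue uniformity either, since $|B_{U_Y}|$ carries no uniform bound in the definition of property A at a scale. Consequently your inequality $|A_1\cap A_2|\ge|B_1\cap B_2|/N$ has no usable $N$, and the $\epsilon/N$ calibration cannot be set up.

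The paper's argument sidesteps this entirely by never forming the raw pullback $\pi$. Instead it fixes, once and for all, a \emph{choice} of $m$ elements of $\mathcal U_Y$ covering each $f(U_X)$ and $n$ elements of $\mathcal U_X$ covering each $f^{-1}(U_Y)$ (via Theorem~2.2), and builds $A_{U_X}$ only from those chosen elements. The numbers $m$ and $n$ then serve as the uniform multiplicative constants, yielding $|A_{U_{X_1}}\triangle A_{U_{X_2}}|\le 2m(n+1)\max|B\triangle B'|$ and $|A_{U_{X_1}}\cap A_{U_{X_2}}|\ge m^{-1}|B\cap B'|$ without ever needing to count how many $\mathcal U_Y$-elements share a preimage. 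If you want to repair your approach, you would similarly have to replace the full scale $\mathcal U_Y$ by a fixed finite-to-one selection; at that point you are essentially reconstructing the paper's construction.
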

\begin{proof}
Let $(X,\mathcal{LSS}_X)$ have bounded scale measure at scale $\UU_X$. Let $(Y,\mathcal{LSS}_Y)$ have bounded scale measure at scale $\mathcal{U}_Y$ and let $\mathcal{LSS}_Y$ have property A at scale $\mathcal{U}_Y$. By theorem 2.2, let $n$ be so that any element of the scale $f^{-1}(\UU_Y)$ is covered by at most $n$ elements of $\UU_X$; likewise, let $m$ be so that any element of the scale $f(\UU_X)$ can be covered by at most $m$ elements of $\UU_Y$. Let $\mathcal{V}_X$ be any scale in $\mathcal{LSS}_X$ and let $\epsilon > 0$ be given. 
Since $f$ is a coarse equivalence both $f(\UU_X)$ and $f(\VV_X)$ are scales in $\mathcal{LSS}_Y$. Therefore $st(f(\VV_X), f(\UU_X))$ is a uniformly bounded family in $\mathcal{LSS}_Y$ and we will set $st(f(\VV_X), f(\UU_X)) = \VV_Y$. 
Then as $(Y,\mathcal{LSS}_Y)$ has property $A$ at scale $\UU_Y$, for $\VV_Y$ and $\frac{\epsilon}{2m^2(n+1)}$ there exists a scale $\WW_Y$ of $\mathcal{LSS}_Y$ and a collection of finite subsets $\left\{B_{U_Y}\right\}$ of $\UU_Y \times \mathds{N}$ such that:
\begin{description}
\item{1.)} For all $U_Y \in \UU_Y, U_Y\times\left\{1\right\} \in B_{U_Y}$
\item{2.)} $B_{U_Y} \subseteq hor(st(U_Y,\WW_Y), \mathcal{U}_Y)$
\item{3.)} $\frac{|B_{U_{Y_1}} \Delta B_{U_{Y_2}}|}{|B_{U_{Y_1}} \cap B_{U_{Y_2}}|}< \frac{\epsilon}{2m^2(n+1)}$ whenever $hor(U_{Y_1},\mathcal{V}_Y)\cap hor(U_{Y_2},\mathcal{V}_Y) \neq \emptyset$
\end{description}
Since $f(\UU_X)$ is a uniformly bounded scale, it must be that for all $U_X \in \UU_X$ there exists $m$ elements of $\UU_Y$ such that $f(U_X) \subseteq \bigcup\limits_{i=1}^{m} U_{Y_i}$. Each $U_{Y_i}$ has a corresponding $B_{U_{Y_i}}$. 
Additionally, $f^{-1}(\mathcal{U}_Y)$ is a scale in $X$ and therefore each element of this scale can be covered by $n$ elements of $\UU_X$ from the definition of bounded scale measure. Thus, for each $f^{-1}({U_{Y_i}})$ we choose $n$ elements of $\UU_X$ that cover it, i.e. $f^{-1}(U_{Y_i})\subseteq \bigcup\limits_{i=1}^{n} U_{X_{i}}$. 
When constructing $A_{U_{X}}$ we will only be using the $m$ elements from $\UU_Y$ that we previously chose such that $f(U_X) \subseteq \bigcup\limits_{i=1}^{m} U_{Y_i}$. Similarly, each $U_{X_i}$ is from the chosen $n$ elements of $\UU_X$ where $f^{-1}(U_{Y_i})\subseteq \bigcup\limits_{i=1}^{n} U_{X_{i}}$ for some $1\leq i\leq m$. 
%we will disregard any other elements of $\left\{B_{U_{Y}}\right\}$ that meet $f(U_X)$ and any other elements of $\UU_X$ that meet each $f^{-1}(U_{Y_i})$ other than those $m$ and $n$ elements that we chose to cover $f(U_x)$ and $f^{-1}(U_y)$ respectively.
For $U_X \in \UU_X$ define $$A_{U_X} = \left\{ (U_X, 1) \right\} \bigcup \left\{ (U_{X_i}, n) \in \UU \times \mathds{N}~|~hor(f(U_{X_i}),\UU_Y) \times \{n\} \cap \bigcup\limits^{m}_{j=1} B_{U_{Y_j}} \neq \emptyset  \right\}$$
where $U_{X_i}$ and $B_{U_{Y_j}}$ are from the chosen $n$  and $m$ elements respectively.
As well, define $\WW_X = f^{-1}(st(\WW_Y, \UU_Y))$.
We must now show that our chosen $A_{U_X}$ and $\WW_X$ satisfy the property A at scale $\UU_X$. By construction, each $A_{U_X}$ contains the element $(U_X , 1)$.

Next, we will now show that any $A_{U_X}$ has the property that $A_{U_X}\subseteq hor(st(U_X,\WW_X),\UU_X)$. 
Indeed, let $(U_{X_i},l)\in A_{U_X}$ for some $l\in \mathbb{N}$. By the choice of the $m$ elements of $\UU_Y$ for the definition of $A_{U_X}$, we have that $f(U_{X})\cap U_{Y_k}\neq\emptyset$ for all $U_{Y_k}$ for $1\leq k \leq m$.
Now, since $(U_{X_i},l)\in A_{U_X}$ we also have that $hor(f(U_{X_i}),\UU_Y)\times\{k\}\cap B_{U_{Y_j}}\neq\emptyset$ for some $k\in \mathbb{N}$ and some $j$ with $1\leq j \leq m$. Using the previous statement, we may define $(U_{Y_1},l)\in B_{U_{Y_j}}$ with the property that $f(U_{X_i}) \cap U_{Y_1} \neq\emptyset$. 
%since $hor(f(U_{X_i}),\UU_Y )\cap \bigcup\limits^{m}_{j=1}B_{U_{Y_j}} \neq \emptyset$ with the property that $f(U_{X_i}) \cap U_{Y_2} \neq \emptyset$ and $U_{Y_2}\times \{k\} \in B_{U_{Y_j}}$ for some $k\in \mathds{N}$.
Because any $B_{U_{Y_j}}\subseteq hor(st(U_{Y_j},\WW_Y),\UU_Y)$, we have that $U_{Y_1}\in hor(st(U_{Y_j},\WW_Y),\UU_Y)$. Thus there exists $W_Y \in \WW_Y$ with the property that $W_Y \cap U_{Y_j} \neq \emptyset$ and $W_Y \cap U_{Y_1} \neq \emptyset$ from the definition of the star and the horizon.
Observe that the set $U_{Y_j}\cup W_Y\cup U_{Y_1}$ is contained in an element of $st(\WW_Y,\UU_Y)$. We will call this element $C$. Then taking the inverse image one has $f^{-1}(C)$ is an element of $f^{-1}(st(\WW_Y,\UU_Y))$ which is defined to be an element of $\WW_X$.
Furthermore, $U_{X_i}\cap f^{-1}(C)\neq\emptyset$
because $U_{X_i} \cap  f^{-1}(U_{Y_1}) \neq \emptyset$ and
$f^{-1}(U_{Y_1})\subseteq f^{-1}(U_{Y_j}\cup W_Y\cup U_{Y_1})\subseteq f^{-1}(C)$. 
Similarly, since $U_X \cap f^{-1}(U_{Y_j}) \neq \emptyset$ and $f^{-1}(U_{Y_j}) \subseteq f^{-1}(U_{Y_j}\cup W_Y\cup U_{Y_1})\subseteq f^{-1}(C)$ 
we have that $f^{-1}(C)\cap U_X \neq \emptyset$. Therefore, we have that $U_{X_i}\in hor(st(U_X,\WW_X),\UU_X)$ as desired.

Finally we must show that given any $U_{X_1}, U_{X_2} \in \UU_X$ with \\
$hor(U_{X_1}, \VV_X) \cap hor (U_{X_2}, \VV_X) \neq \emptyset$, then $\frac{|A_{U_{X_1}} \Delta A_{U_{X_2}}|}{|A_{U_{X_1}} \cap A_{U_{X_2}}|}< \epsilon$.
To begin, recall that
$$A_{U_{X}} = \left\{(U_X, 1) \right\} \bigcup \left\{(U_{X_i}, n) \in \UU \times \mathds{N} ~|~ hor(f(U_{X_i}),\UU_Y) \times \left\{n \right\} \cap \bigcup^{m}_{j=1}B_{U_{Y_j}} \neq \emptyset  \right\}$$
for a chosen collection of $m$ elements $U_{Y_j}$ from $\UU_Y$.
Additionally, each $U_{Y_j}$ contains at most $n$ chosen $f(U_{X_i})$'s and the original $U_{X}$. This combined with the fact that both $f(U_{X_1})$ and $f(U_{X_2})$ may intersect $m$ unique elements taken from the $B_{U_{Y_j}}$ each, 
we have 
$$|A_{U_{X_1}} \Delta A_{U_{X_2}}| \leq 2m(n+1)\cdot max\left\{ |B_{U_{Y_k}} \Delta B_{U_{Y_l}}|~ | 1\leq k \leq m, 1\leq l\leq m \right \}.$$ 
However, since we have $hor(U_{X_1}, \VV_X) \cap hor (U_{X_2}, \VV_X) \neq \emptyset$ then for any of the counted $B_{U_{Y_k}}$ or $B_{U_{Y_l}}$ for $U_{X_1}$ and $U_{X_2}$ respectively 
it must be that \\$hor(U_{Y_k}, \VV_Y) \cap hor(U_{Y_l}, \VV_Y) \neq \emptyset$ as $\VV_Y = st(f(\VV_X), f(\UU_X))$. Therefore, 
$$|B_{U_{Y_k}} \Delta B_{U_{Y_l}}| < \frac{\epsilon}{2m(n+1)}\cdot |B_{U_{Y_k}} \cap B_{U_{Y_l}}|$$ for all $1\leq k\leq m, 1\leq l\leq m$.
We will finish by showing that $|A_{U_{X_1}} \cap A_{U_{X_2}}| \geq m\cdot |B_{U_{Y_k}} \cap B_{U_{Y_l}}|$ for all counted $B_{U_{Y_k}}$ and $B_{U_{Y_l}}$. 
First of all, let $U_{Y_*}\times \{z\}\in B_{U_{Y_l}} \cap B_{U_{Y_k}}$. We know there exists $n$ elements of $\UU_X$ that cover $f^{-1}(U_{Y_*})$. Thus there exists at least one element $U_{X_*} \in \UU_X$ with $f(U_{X_*})\cap U_{Y_*}\neq \emptyset.$ Therefore $U_{X_*}\times \{z\} \in A_{U_{X_1}} \cap A_{U_{X_2}}$. Additionally, we know that there exist $m$ elements of $\UU_Y$ that cover $f(U_{X_*})$ by construction. Thus, there are at most $m$ elements of $B_{U_{Y_l}} \cap B_{U_{Y_k}}$ that can correspond with $U_{X_*}\in A_{U_{X_1}} \cap A_{U_{X_2}}$, and $|A_{U_{X_1}} \cap A_{U_{X_2}}| \geq m\cdot |B_{U_{Y_k}} \cap B_{U_{Y_l}}|$ as we wanted.
Finally, we choose a specific $k$ and $l$ so that $|B_{U_k} \Delta B_{U_l}| = max\left\{ |B_{U_{Y_k}} \Delta B_{U_{Y_l}}|~ | 1\leq k \leq m, 1\leq l\leq m \right \}.$
Combining this altogether, we get 
$$\frac{|A_{U_{X_1}} \Delta A_{U_{X_2}}|}{|A_{U_{X_1}} \cap A_{U_{X_2}}|} \leq \frac{2m^2(n+1)|B_{U_{Y_l}} \Delta B_{U_{Y_k}}|}{|B_{U_{Y_l}} \cap B_{U_{Y_k}}|}<  \frac{2m^2(n+1)\cdot \epsilon}{2m^2(n+1)} = \epsilon$$
and $(X, \mathcal{LSS}_X)$ has property A at scale $\UU_X$.
\end{proof}

Using the previous proof and that bounded scale measure is a coarse invariant, one has the immediate corollary:

\begin{Corollary}
Let $(X,\mathcal{LSS}_X)$ and $(Y,\mathcal{LSS}_Y)$ be large scale structures with $f:X\to Y$ a coarse equivalence. If $(Y,\mathcal{LSS}_Y)$ has bounded scale measure and property A at a scale, then $(X,\mathcal{LSS}_X)$ has bounded scale measure and property A at a scale. 
\end{Corollary}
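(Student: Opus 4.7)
The plan is to observe that the statement decomposes cleanly into an application of two earlier results, and that no new work is required beyond assembling them in the correct order. The subtlety (if any) is that the preceding theorem on coarse invariance of property A at a scale \emph{assumes} both $(X,\mathcal{LSS}_X)$ and $(Y,\mathcal{LSS}_Y)$ have bounded scale measure. In the corollary, we only assume $(Y,\mathcal{LSS}_Y)$ has bounded scale measure, so our first task is to promote that hypothesis to $X$.

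First, I would invoke Theorem 2.6 (the coarse invariance of bounded scale measure). Since $f:X\to Y$ is a coarse equivalence and $(Y,\mathcal{LSS}_Y)$ has bounded scale measure, Theorem 2.6 immediately yields that $(X,\mathcal{LSS}_X)$ also has bounded scale measure. Concretely, if $(Y,\mathcal{LSS}_Y)$ has bounded scale measure at scale $\mathcal{U}_Y$, then the proof of Theorem 2.6 produces $\mathcal{U}_X = f^{-1}(\mathcal{U}_Y)$ as a scale at which $(X,\mathcal{LSS}_X)$ has bounded scale measure.

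Second, with both spaces now known to satisfy bounded scale measure, the hypotheses of the preceding theorem are fully met: $(X,\mathcal{LSS}_X)$ and $(Y,\mathcal{LSS}_Y)$ are large scale spaces with bounded scale measure, $f:X\to Y$ is a coarse equivalence, and $(Y,\mathcal{LSS}_Y)$ has property A at a scale. That theorem then directly yields that $(X,\mathcal{LSS}_X)$ has property A at a scale.

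Combining these two conclusions gives the corollary: $(X,\mathcal{LSS}_X)$ has both bounded scale measure and property A at a scale. There is no real obstacle here — the whole point of isolating the coarse invariance of bounded scale measure as Theorem 2.6 was to allow exactly this kind of chaining — so the proof should be only two or three sentences long, essentially just citing the two theorems in sequence.
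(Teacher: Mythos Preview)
Your proposal is correct and matches the paper's approach exactly: the paper simply states that ``using the previous proof and that bounded scale measure is a coarse invariant, one has the immediate corollary,'' which is precisely your two-step chaining of Theorem~2.6 followed by the preceding theorem on coarse invariance of property A at a scale.
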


In closing, we would like to give a parting thought regarding possible connections between bounded scale measure and coarsely doubling spaces along with weakly paracompact spaces shown in \cite{cavp}. Many of these ideas are similar or even equivalent in metric spaces. One wonders how they may compare in a more general setting. Additionally, it may be worthwhile to consider connections between property A at a scale and finite asymptotic dimension.

\section{Appendix}

We begin the appendix with a proof that a large scale continuous function $f$ is a coarse equivalence if and only if $f$ is a coarse embedding and is coarsely surjective:

\begin{Theorem}
A large scale continuous map $f: X \rightarrow Y$ is a coarse equivalence if and only if $f$ is a coarse embedding and is coarsely surjective.
\end{Theorem}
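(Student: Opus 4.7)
The plan is to argue the two directions separately. In both halves the key tools are the large scale axioms, especially that $\st(\mathcal{A},\mathcal{B})\in\mathcal{LSS}$ whenever $\mathcal{A},\mathcal{B}\in\mathcal{LSS}$, together with closure of $\mathcal{LSS}$ under refinement (modulo singletons).

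For the forward direction, I would assume $g:Y\to X$ is a large scale continuous coarse inverse to $f$. Coarse surjectivity is immediate from closeness of $f\circ g$ to $id_Y$: it yields $\mathcal{V}_0\in\mathcal{LSS}_Y$ such that each $y$ and $f(g(y))$ share an element of $\mathcal{V}_0$, so $Y\subseteq\st(f(X),\mathcal{V}_0)$. For the coarse embedding property, I would fix $\mathcal{V}\in\mathcal{LSS}_Y$ and use closeness of $g\circ f$ to $id_X$ via some $\mathcal{U}_0\in\mathcal{LSS}_X$: for any $V\in\mathcal{V}$ and $x\in f^{-1}(V)$, the pair $\{x,g(f(x))\}$ lies in an element of $\mathcal{U}_0$, giving $f^{-1}(V)\subseteq\st(g(V),\mathcal{U}_0)$. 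Because $g(\mathcal{V})\in\mathcal{LSS}_X$ by large scale continuity of $g$, the family $\st(g(\mathcal{V}),\mathcal{U}_0)$ is uniformly bounded, and $f^{-1}(\mathcal{V})$ refines it; hence $f^{-1}(\mathcal{V})\in\mathcal{LSS}_X$.

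For the reverse direction, I would use coarse surjectivity to fix $\mathcal{V}_0\in\mathcal{LSS}_Y$ with $Y\subseteq\st(f(X),\mathcal{V}_0)$, and for each $y\in Y$ choose $V_y\in\mathcal{V}_0$ containing $y$ with $V_y\cap f(X)\neq\varnothing$ together with a point $g(y)\in f^{-1}(V_y)$ (picking a genuine preimage whenever $y\in f(X)$). By construction $y$ and $f(g(y))$ share the element $V_y$, so $f\circ g$ is close to $id_Y$ via $\mathcal{V}_0$. For $g\circ f$ close to $id_X$, for every $x\in X$ both $f(x)$ and $f(g(f(x)))$ lie in a common element of $\mathcal{V}_0$, so $\{x,g(f(x))\}$ sits in the corresponding element of the pullback family $f^{-1}(\mathcal{V}_0)$, which is uniformly bounded by the coarse embedding property. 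For large scale continuity of $g$, given $\mathcal{V}'\in\mathcal{LSS}_Y$, each $g(V')$ with $V'\in\mathcal{V}'$ satisfies $g(V')\subseteq f^{-1}(\st(V',\mathcal{V}_0))$ by the choice of $V_y$, and $f^{-1}(\st(\mathcal{V}',\mathcal{V}_0))\in\mathcal{LSS}_X$ via the star axiom combined with the coarse embedding property, so $g(\mathcal{V}')$ refines a uniformly bounded family and is itself uniformly bounded.

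I expect the main obstacle to be the reverse direction, since $g$ is defined by an arbitrary choice of witness and preimage for each point, and the three required properties each invoke a different uniformly bounded family in $\mathcal{LSS}_X$: the pullback $f^{-1}(\mathcal{V}_0)$ controls $g\circ f\sim id_X$, while the larger pullback $f^{-1}(\st(\mathcal{V}',\mathcal{V}_0))$ controls continuity. The coarse embedding hypothesis is precisely what allows one to transfer $\mathcal{V}_0$-level information back to $X$, while the star closure axiom is what absorbs the slack introduced by the choice-based definition of $g$.
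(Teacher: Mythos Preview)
Your proposal is correct and follows essentially the same route as the paper's proof: in the forward direction you derive coarse surjectivity from $f\circ g\sim id_Y$ and the coarse embedding property from $f^{-1}(V)\subseteq\st(g(V),\mathcal{U}_0)$, exactly as the paper does; in the reverse direction you define $g$ by choosing a witness point, verify the two closeness conditions using $\mathcal{V}_0$ and $f^{-1}(\mathcal{V}_0)$, and establish large scale continuity of $g$ via $g(V')\subseteq f^{-1}(\st(V',\mathcal{V}_0))$, which is the paper's argument stated slightly more directly (the paper routes through $g(U)\subseteq f^{-1}(f(g(U)))$ first, but this is the same containment).
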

\begin{proof}
Assume that $f: X \rightarrow Y$ is a coarse equivalence.
First we will show that $f$ is a coarse embedding. Let $\mathcal{U}$ be a uniformly bounded cover of $Y$ and let $g:Y\rightarrow X$ be the function such that $g\circ f:X\rightarrow X$ is close to the identity $id_X$. Then there exists a uniformly bounded family $\mathcal{V}$ of $X$ such that for every $x\in X$ there exists a $V\in \mathcal{V}$ with $x\in V$ and $g\circ f(x)\in V$. Let $\mathcal{U}$ be a uniformly bounded family in $Y$. Since $g$ is large scale continuous, then $g(\mathcal{U})=\{g(U):U\in \mathcal{U}\}$ is a uniformly bounded family in $X$. We claim the uniformly bounded family $\mathcal{W}=st(g(\mathcal{U}),\mathcal{V})$ coarsens $f^{-1}(\mathcal{U})$. \\
Let $U\in \mathcal{U}$. Consider any $x\in X$ such that $f(x)=y \in U$. From the definition of coarse equivalence, there exists a $V_y\in \mathcal{V}$ such that $\{g\circ f(x), x\}\subseteq V_y$. Therefore, $\{g(y),x\}\subseteq V_y$ for all $x\in X$ with $f(x)=y$, or $\{g(y),f^{-1}(y)\}\subseteq V_y$. \\
Now, for $U\in \mathcal{U}$, $f^{-1}(U)= \bigcup\limits_{y\in U} f^{-1}(y) \subseteq \bigcup\limits_{y\in U} V_y$ for $V_y$ defined above. However, for $y\in U$ we know $\{g(y),x\} \in V_y$ for all $x\in X$ with $f(x)=y$, so $V_y\cap g(U) \neq \emptyset$. Thus for all $y\in U$ $V_y \subseteq st(g(U),\mathcal{V})$. Therefore $f^{-1}(U) \subseteq st(g(U),\mathcal{V})$, and from the definition of a large scale structure we have $f^{-1}(\mathcal{U})$ is a uniformly bounded family hence $f$ is a coarse embedding. \\
Next we will show that $f$ is coarsely surjective. From the definition of coarse equivalence, there exists a uniformly bounded cover $\mathcal{U}$ of $Y$ such that for every $y \in Y$ there exists a $U \in \mathcal{U}$ with $\{f\circ g(y),y\}\subseteq U$. Let $y \in Y$ be arbitrary. Then there exists a $U \in \mathcal{U}$ with $\{f\circ g(y),y\}\subseteq U$. Therefore $y\in st(f\circ g(Y), \mathcal{U})$. This implies $Y \subseteq st(f\circ g(Y), \mathcal{U})$ and since $f\circ g(Y)\subseteq f(X)$ we have that $Y \subseteq st(f\circ g(X), \mathcal{U})$, and f is coarsely surjective.\\
Conversely, suppose $f$ is a large scale continuous map that is a coarse embedding and is coarsely surjective. First we will construct a large scale continuous function $g:Y\rightarrow X$. From coarse surjectivity, we know there exists a uniformly bounded family $\mathcal{U}$ in $Y$ such that $Y\subseteq st(f(X),\mathcal{U})$. Thus, for every $y \in Y$ there exists a $U_y \in \mathcal{U}$ with $U_y \cap f(X) \neq \emptyset$. Choose an $x_y\in X$ such that $f(x_y)\in U_y$. Define $g:Y\rightarrow X$ with $g(y)=x_y$. \\
We first show that $f\circ g$ is close to $id_Y$. Let $y\in Y$. From the construction of $g$, there exists $U_y \in \mathcal{U}$ with $y\in U_y$, and $f\circ g(y)\in U_y$. Thus $\{f\circ g (y), y\}\subseteq U_y$ and thus $f\circ g$ is close to $id_Y$. 
Now we will show that $g \circ f$ is close to $id_X$. Let $y\in Y$. By coarse embeddability we know $f^{-1}(\mathcal{U})$ is a uniformly bounded family in $X$. Choose $x\in X$. We know from coarse surjectivity that there exists $U_{f(x)} \in \mathcal{U}$ with both $f(x) \in U_{f(x)}$ and $f\circ g\circ f(x) \in U_{f(x)}$ from the construction of the function $g$. Therefore $\{g\circ f(x), x\} \subseteq f^{-1}(U_{f(x)})=\{z\in X | f(z)\in U_{f(x)}\}$, and $g \circ f$ is close to $id_X$. \\
Finally we will show that $g$ is large scale continuous. Let $\mathcal{U}$ be a uniformly bounded family of $Y$. Since $f\circ g$ is close to $id_Y$, then there exists a uniformly bounded family $\mathcal{V}$ of $Y$ such that for every $y\in Y$ there exists $V\in \mathcal{V}$ with $\{f\circ g(y), y\} \subseteq V$. Let $U \in \mathcal{U}$. For every $y\in U$, there exists $V \in \mathcal{V}$ such that $\{f\circ g(y), y\} \subseteq V$. Therefore, for every $U\in \mathcal{U}$, $f\circ g(U) \subset st(U,\mathcal{V})$, and we have that $f\circ g(\mathcal{U})$ is uniformly bounded in $Y$. Finally, since $f$ is a coarse embedding $f^{-1}\circ f \circ g(\mathcal{U})$ is a uniformly bounded family and for all $U \in \mathcal{U}$, $g(U) \subseteq f^{-1} \circ f \circ g(U)$. This means that $g(\mathcal{U})$ is uniformly bounded in $X$ from the definition of large scale structure. 
\end{proof}

We will now talk about coarse structures and provide a proof for theorem 3.1. Coarse structures were introduced by Higson and Roe for use in index theory and signature theory. Coarse structures were to give an approach for the Novikov and Coarse Baum-Connes conjectures. Much like scales and families of subsets of a set $X$, a metric space $X$ gives a natural example of what is to follow.

\begin{Definition}
	Let $X$ be a set and consider the set $X\times X$:
	\begin{enumerate}
		\item The \textbf{diagonal} of $X$ is denoted by $\Delta$ and is defined as $\Delta =\{(x,x)|~x\in X\}$.
		\item Let $U\subseteq X\times X$. Define the \textbf{inverse} of $U$, denoted by $U^{-1}$, to be $U^{-1}=\{(y,x)|~(x,y)\in U\}$.
		\item Let $U,V\subseteq X\times X$ and define the \textbf{product} of $U$ and $V$ to be $U\circ V=\{(x,z)|~(x,y)\in U~and~(y,z)\in V~ \mathrm{for~ some}~y\in X\}$. 
	\end{enumerate}
\end{Definition}

\begin{Definition}
	 A \textbf{coarse structure} $\mathcal{C}$ on a set $X$ is a family $\mathcal{X}$ of subsets of $X\times X$ that satisfy:
	\begin{enumerate}
		\item $\Delta\in\mathcal{X}$
		\item If $U\in\mathcal{X}$, then $U^{-1}\in\mathcal{X}$.
		\item If $U,V\in\mathcal{X}$, then $U\circ V\in\mathcal{X}$.
		\item If $U\in\mathcal{X}$ and $V\subseteq U$, then $V\in\mathcal{X}$.
		\item If $U,V\in\mathcal{X}$, then $U\bigcup V\in\mathcal{X}$.
	\end{enumerate}
		The elements of a large scale structure are called \textbf{controlled sets} or \textbf{entourages} and the pair $(X,\mathcal{C})$ is called a \textbf{coarse space}.
\end{Definition}

The interested reader in coarse spaces can look at \cite{Roe03}. We now provide definitions of a uniformly locally finite coarse space and
 of property A for a uniformly locally finite coarse space as defined by Sako. They can be found in \cite{Sako}:
\begin{Definition}
A coarse space $(X,\mathcal{C})$ is said to be \textbf{uniformly locally finite} if for every controlled set $T\in \mathcal{C}$ satisfies the inequality $sup_{x\in X}|T[x]|< \infty$ where\\ $T[x]=\{y\in X~|~(y,x)\in T\}$.
\end{Definition}
When giving the following definition, Sako states that a metric space with a uniformly locally finite coarse structure is called a metric space with bounded geometry.
\begin{Definition}(Sako Definition) 
A uniformly locally finite coarse space $(X,\mathcal{C})$ is said to have \textbf{property A} if for every positive number $\epsilon$ and every controlled set $T\in \mathcal{C}$, there exists a controlled set $S\in \mathcal{C}$ and a subset $A^{\mathcal{C}} \subset S \times \mathds{N}$ such that
\begin{itemize}
\item For $x \in X$, $A_x^{\mathcal{C}} = \{(y,n)\in X\times \mathds{N}; (x,y,n)\in A^{\mathcal{C}}\}$ is finite
\item $\Delta_X \times \{1\} \subset A^{\mathcal{C}}$, where $\Delta_X$ is the diagonal subset of $X\times X$.
\item $|A_x^{\mathcal{C}} \Delta A_y^{\mathcal{C}}| < \epsilon |A_x^{\mathcal{C}} \cap A_y^{\mathcal{C}}|$ if $(x,y)\in T$.
\end{itemize}
\end{Definition}

Dydak and Hoffland showed in \cite{Dydak} that there is a one to one correspondence between Coarse structures and Large Scale structures. The following are propositions 2.4 and 2.5 of \cite{Dydak}:

\begin{prop}
Let $X$ be a set. Every large scale structure $\mathcal{LSS}_X$ induces a coarse structure $\mathcal{C}$ on $X$ as follows: A subset $E$ of $X\times X$ is declared controlled if and only if there is a $\mathcal{B}\in\mathcal{LSS}_X$ such that $E\subset\bigcup\limits_{B\in\mathcal{B}} B\times B$.\\
Every coarse structure $\mathcal{C}$ on $X$ induces a large scale structure $\mathcal{LSS}$ on $X$ as follows: $\mathcal{B}$ is declared uniformly bounded if and only if there is a controlled set $E$ such that $\bigcup\limits_{B\in\mathcal{B}} B\times B\subset E$.
\end{prop}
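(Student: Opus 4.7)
The plan is to verify, in each direction, that the stated definition satisfies all axioms of the target structure. No bijection is asserted, so it suffices to check that each construction produces a valid structure.

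For the forward direction (an LSS induces a coarse structure), I would check the five coarse-structure axioms. The diagonal $\Delta$ lies in $\mathcal{C}$ because $\{\{x\}\}_{x\in X}$ is uniformly bounded (a consequence of axiom 1 applied to any uniformly bounded family) and $\Delta=\bigcup_x\{x\}\times\{x\}$. Symmetry is immediate since each $B\times B$ is its own inverse, and closure under subsets is trivial. For the composition axiom, given $E_1\subseteq\bigcup_{B\in\mathcal{B}_1}B\times B$, $E_2\subseteq\bigcup_{B\in\mathcal{B}_2}B\times B$, and $(x,z)\in E_1\circ E_2$ bridged by $y$ with $x,y\in B_1$ and $y,z\in B_2$, I would show both $B_1$ and $B_2$ lie inside $\st(B_1,\mathcal{B}_2\cup\{\{x\}\}_{x\in X})$: the singleton padding absorbs $B_1$ into its own star, and $B_2\cap B_1\ni y$ puts $B_2$ there as well. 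The enclosing family is in $\mathcal{LSS}$ by axioms 1 and 2, so $E_1\circ E_2\in\mathcal{C}$. The binary-union axiom is handled analogously by covering $\mathcal{B}_1\cup\mathcal{B}_2$ with $\st(\mathcal{B}_1\cup\{\{x\}\}_{x\in X},\mathcal{B}_2\cup\{\{x\}\}_{x\in X})$.

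For the backward direction (a coarse structure induces an LSS), I would check the two LSS axioms. For axiom 1, if $\mathcal{V}$ is witnessed by $E\in\mathcal{C}$ with $\bigcup_V V\times V\subseteq E$ and each $U\in\mathcal{U}$ with $|U|>1$ lies in some $V\in\mathcal{V}$, then for such $U$ we have $U\times U\subseteq V\times V\subseteq E$, while singleton $U$'s contribute only to $\Delta$; hence $\bigcup_U U\times U\subseteq E\cup\Delta\in\mathcal{C}$. For axiom 2, given $\bigcup_U U\times U\subseteq E_1$ and $\bigcup_V V\times V\subseteq E_2$, I would establish $\bigcup_{U\in\mathcal{U}}\st(U,\mathcal{V})\times\st(U,\mathcal{V})\subseteq E_2\circ E_1\circ E_2$: for $(a,b)$ in the left side pick $V_a,V_b\in\mathcal{V}$ meeting $U$ at witnesses $u_a,u_b$, giving the chain $(a,u_a)\in E_2$, $(u_a,u_b)\in E_1$, $(u_b,b)\in E_2$.

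The main obstacle is the forward direction's composition and union axioms, where $\mathcal{LSS}$ has no direct ``union'' closure operation. The trivial-extension device — padding $\mathcal{B}$ with $\{\{x\}\}_{x\in X}$ before starring — is what guarantees every element of the original family is absorbed into the new star family, and it is the crucial technical idea that makes both of these axioms go through.
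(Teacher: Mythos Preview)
The paper does not supply its own proof of this proposition; it is quoted verbatim from Dydak--Hoffland \cite{Dydak} (propositions 2.4 and 2.5 there) and used as a black box. Your proposal therefore cannot be compared against a proof in the paper, but it is a correct and complete verification of the axioms in both directions. The trivial-extension trick you use to push the composition and union axioms through on the forward side is exactly the right device, and your chain $(a,u_a,u_b,b)$ giving $E_2\circ E_1\circ E_2$ for the star axiom on the backward side is the standard argument. One trivial point you leave implicit: non-emptiness of the induced $\mathcal{LSS}$ follows from $\Delta\in\mathcal{C}$ witnessing that $\{\{x\}\}_{x\in X}$ is uniformly bounded.
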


Given the above proposition, an equivalent definition of property A for Large Scale structures with bounded geometry can be made. We provide our definition below to refresh the reader:

\begin{Definition} 
Let $X$ be a set and let $\mathcal{LSS}$ be a Large Scale structure on $X$ with bounded geometry. ($X, \mathcal{LSS}$) is said to have \textbf{property A} if for every $\epsilon >0$ and every uniformly bounded family $\mathcal{U} \in \mathcal{LSS}$, there exists $\mathcal{V} \in \mathcal{LSS}$ and a family of finite subsets $\{A_x^{\mathcal{LSS}}\}$ of $X\times \mathds{N}$ such that
\begin{itemize}
\item $A_x^{\mathcal{LSS}} \subset st(x,\mathcal{V})\times \mathds{N}$
\item $(x,1) \in A_x^{\mathcal{LSS}}$
\item $|A_x^{\mathcal{LSS}} \Delta A_y^{\mathcal{LSS}}| < \epsilon |A_x^{\mathcal{SLS}} \cap A_y^{\mathcal{LSS}}|$ if $y \in st(x,\mathcal{U})$.
\end{itemize}
\end{Definition}

We end the appendix with the claim that our definition of property A for bounded geometry is equivalent to Sako's definition of property A. Earlier in the paper, this was theorem 3.1:

\begin{Theorem}
 Let $(X,\mathcal{LSS})$ be a large scale space with bounded geometry. A uniformly locally finite coarse space $(X,\mathcal{C})$ has property A, then the induced large scale structure ($X, \mathcal{LSS}$) has property A. Likewise, if ($X, \mathcal{LSS}$) has property A, then the induced coarse structure $(X,\mathcal{C})$ also has property A.
\end{Theorem}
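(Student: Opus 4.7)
The plan is to establish the theorem by using the Dydak--Hoffland correspondence (Proposition in the appendix) to translate controlled sets into uniformly bounded families and back. Under this correspondence, the controlled set $T$ of Sako's definition plays essentially the role of the ``slab'' $\bigcup_{U\in\mathcal{U}} U\times U$ of our uniformly bounded family $\mathcal{U}$, and a $\mathcal{U}$-star around a point $x$ corresponds to the slice $T[x]$. The same dictionary converts the finiteness conditions (uniform local finiteness on one side, bounded geometry on the other), so that the subsets $A_x^\mathcal{C}$ and $A_x^\mathcal{LSS}$ can be taken identical and only the ambient ``controlling'' structure needs to be rewritten.

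First, for the forward implication, suppose $(X,\mathcal{C})$ has property A in Sako's sense. Given $\epsilon>0$ and a uniformly bounded family $\mathcal{U}\in\mathcal{LSS}$, I would set $T=\Delta_X\cup\bigcup_{U\in\mathcal{U}}U\times U$, which is controlled by the correspondence. Apply Sako's definition to $(\epsilon,T)$ to obtain a controlled set $S$ and a set $A^\mathcal{C}\subset S\times\mathds{N}$. Define $A_x^\mathcal{LSS}:=A_x^\mathcal{C}$ for each $x$ and let $\mathcal{V}=\{S[x]\cup\{x\}:x\in X\}$. The family $\bigcup_{V\in\mathcal{V}}V\times V$ is contained in $(S\cup S^{-1}\cup\Delta_X)\circ(S\cup S^{-1}\cup\Delta_X)$, which is controlled, so by the correspondence $\mathcal{V}\in\mathcal{LSS}$. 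If $(y,n)\in A_x^\mathcal{LSS}$ then $(x,y)\in S$, hence $y\in V_x\subseteq\mathrm{st}(x,\mathcal{V})$; each $A_x^\mathcal{LSS}$ is finite because the corresponding $A_x^\mathcal{C}$ is; the second bullet is immediate; and the third bullet follows because $y\in\mathrm{st}(x,\mathcal{U})$ forces $(x,y)\in T$, so Sako's inequality transfers verbatim.

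Next, for the reverse implication, suppose $(X,\mathcal{LSS})$ has property A as defined in the paper. Given $\epsilon>0$ and a controlled set $T$, pick $\mathcal{U}\in\mathcal{LSS}$ with $T\subseteq\bigcup_{U\in\mathcal{U}}U\times U$. Apply the LSS definition to $(\epsilon,\mathcal{U})$ to obtain $\mathcal{V}\in\mathcal{LSS}$ and finite subsets $\{A_x^\mathcal{LSS}\}$. Set $A^\mathcal{C}=\{(x,y,n):(y,n)\in A_x^\mathcal{LSS}\}$ and $S=\Delta_X\cup\bigcup_{V\in\mathcal{V}}V\times V$, which is controlled by the correspondence. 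Since $A_x^\mathcal{LSS}\subset\mathrm{st}(x,\mathcal{V})\times\mathds{N}$ every element of $A^\mathcal{C}$ lies in $S\times\mathds{N}$; each slice $A_x^\mathcal{C}=A_x^\mathcal{LSS}$ is finite; the diagonal condition comes from $(x,1)\in A_x^\mathcal{LSS}$; and if $(x,y)\in T$ then $x,y$ share an element of $\mathcal{U}$, so $y\in\mathrm{st}(x,\mathcal{U})$ and the LSS inequality yields Sako's.

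The main obstacle, and the only point requiring genuine care, is the bookkeeping of ``$\mathrm{st}(x,\mathcal{V})$ versus $S[x]$'': neither direction has a literal match between these sets, so in both directions one has to fatten $S$ slightly (by symmetrizing and adjoining the diagonal) or fatten $\mathcal{V}$ (by enforcing $x\in V_x$) so that the containments $A_x^\mathcal{C}\subset S[x]\times\mathds{N}$ and $A_x^\mathcal{LSS}\subset\mathrm{st}(x,\mathcal{V})\times\mathds{N}$ correspond exactly. Once those small enlargements are in place, the correspondence between controlled sets and uniformly bounded families, together with the fact that bounded geometry for $\mathcal{LSS}$ is equivalent to uniform local finiteness for the induced $\mathcal{C}$ (via the uniformly bounded family $\mathrm{st}(\{\{x\}\}_{x\in X},\mathcal{U})$), does all the remaining work.
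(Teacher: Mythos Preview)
Your proposal is correct and follows essentially the same strategy as the paper: translate between controlled sets and uniformly bounded families via the Dydak--Hoffland correspondence, keep the sets $A_x$ literally unchanged, and check that the three bullets match up. The only cosmetic difference is that in the $\mathcal{C}\Rightarrow\mathcal{LSS}$ direction the paper defines $V_x=\{y:(y,n)\in A_x^{\mathcal{C}}\text{ for some }n\}$ and bounds $\bigcup V_x\times V_x$ by $S^{-1}\circ S$, whereas you take the slightly larger $V_x=S[x]\cup\{x\}$; both choices work and the symmetrization issue you flag is exactly the one the paper handles with $S^{-1}\circ S$.
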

\begin{proof} 
Let $(X,\mathcal{LSS})$ be a large scale space with property A. Let $\mathcal{C}$ be the coarse structure induced by $\mathcal{LSS}$. Let $\epsilon>0$ and $T\in \mathcal{C}$. Thus, there exists a uniformly bounded family $\mathcal{U} \in \mathcal{LSS}$ with $T \subset \bigcup\limits_{U \in \mathcal{U}}U\times U$. Let $\mathcal{V}$ be the uniformly bounded family satisfying the definition of property A for the above $\epsilon$ and $\mathcal{U}$. Define $S:= \bigcup\limits_{V \in \mathcal{V}}V\times V$ which is a controlled set by prop 4.2. Define $A^{\mathcal{C}}:= \bigcup\limits_{x\in X} \{x\}\times A_x^{\mathcal{LSS}}$.

We will show that $A^{\mathcal{C}}\subset S\times \mathds{N}$. Given $ (x,y,n)\in A^{\mathcal{C}}$ then $(x,y)\in \{x\}\times A_x^{\mathcal{LSS}}$ or $y \in A_x^{\mathcal{LSS}} \subset st(x,\mathcal{V})$. Thus, there exists a $V\in \mathcal{V}$ such that $x,y\in V$. Therefore $(x,y)\in V\times V$, and $A^{\mathcal{C}}\subset S\times \mathds{N}$.

Next, we will show that for $x \in X$, $A_x^{\mathcal{C}} = \{(y,n)\in X\times \mathds{N}; (x,y,n)\in A\}=A_x^{\mathcal{LSS}}$. Let $x\in X$ and consider $(y,n)\in A_x^{\mathcal{C}}$. By definition $(y,n)\in A_x^{\mathcal{LSS}}$. Similarly, if $(z,m)\in A_x^{\mathcal{LSS}}$, then $(x,z,m)\in A^{\mathcal{C}}$, and therefore $(z,m)\in A_x^{\mathcal{C}}$, so $A_x^{\mathcal{C}}= A_x^{\mathcal{LSS}}$ \\
By definition of $A_x^{\mathcal{LSS}}$, and $A^{\mathcal{C}}$ we get
\begin{itemize}
\item For $x \in X$, $A_x^{\mathcal{C}} = \{(y,n)\in X\times \mathds{N}| (x,y,n)\in A^{\mathcal{C}}\}$ is finite
\item $\Delta_X \times \{1\} \subset A^{\mathcal{C}}$, where $\Delta_X$ is the diagonal subset of $X\times X$.
\end{itemize}
The third property also follows easily as if $(x,y)\in T$, then there exists $U \in \mathcal{U}$ with $x,y \in U$. Thus, $y\in st(x,\mathcal{U})$, so we have
$$\frac{|A_x^{\mathcal{C}} \Delta A_y^{\mathcal{C}}|}{|A_x^{\mathcal{C}} \cap A_y^{\mathcal{C}}|} = \frac{|A_x^{\mathcal{LSS}} \Delta A_y^{\mathcal{LSS}}|}{|A_x^{\mathcal{LSS}} \cap A_y^{\mathcal{LSS}}|}< \epsilon$$
and $(X,\mathcal{C})$ has property A. \\
Conversly, assume that $(X,\mathcal{C})$ has Property A. Then we will show that $(X,\mathcal{LSS})$ has property A, where $\mathcal{LSS}$ is the large scale structure induced by $\mathcal{C}$. Let $\epsilon >0$ and $\mathcal{U}$ be a uniformly bounded cover. Thus there exists a controlled set $T\in \mathcal{C}$ such that $\bigcup\limits_{U\in \mathcal{U}}U\times U \subset T$. Let $S$ be the controlled set and $A\subset S\times \mathds{N}$ satisfying property A for $\epsilon$ and $T$. \\
Define $\mathcal{V}=\{V_x\}_{x\in X}$ where $V_x = \left\{y\in X; (y,n)\in A_x, n\in \mathds{N}\right\}$. It needs to be shown that $\mathcal{V}$ is uniformly bounded: given $(y,z)\in \bigcup\limits_{V\in \mathcal{V}}V\times V$. Then, for some $x \in X, (x,y,n), (x,z,m)\in A^{\mathcal{C}}\subset S\times \mathds{N}$ for some $n,m \in \mathds{N}$. In addition, $(x,y), (x,z) \in S$ meaning $(y,x)\in S^{-1}$ and thus $(y,z)\in S^{-1}\circ S$ which is a controlled set. Since $(y,z)$ was chosen arbitrarily, we have that $\mathcal{V}$ is uniformly bounded. \\
Note that $st(x,\mathcal{V})=\{y\in X; (y,n)\in A_z^{\mathcal{C}}$ and $(x,m)\in A_z^{\mathcal{C}}$ for $z \in X\}$. Since $(x,1)\in A_x^{\mathcal{C}}$, then $A_x^{\mathcal{C}}\subset st(x,\mathcal{V})$. Setting $A_x^{\mathcal{LSS}}=A_x^{\mathcal{C}}$ one can show:
\begin{itemize}
\item $A_x^{\mathcal{LSS}} \subset st(x,\mathcal{V})\times \mathds{N}$
\item $(x,1) \in A_x^{\mathcal{LSS}}$
\end{itemize}
are true. Finally, if $y \in st(x,\mathcal{U})$, then there exists $U \in \mathcal{U}$ such that $x,y\in U$ which means $(x,y)\in \bigcup\limits_{U\in \mathcal{U}}U\times U \subset T$ and the following holds:
$$\frac{|A_x^{\mathcal{LSS}} \Delta A_y^{\mathcal{LSS}}|}{|A_x^{\mathcal{LSS}} \cap A_y^{\mathcal{LSS}}|} = \frac{|A_x^{\mathcal{C}} \Delta A_y^{\mathcal{C}}|}{|A_x^{\mathcal{C}} \cap A_y^{\mathcal{C}}|} < \epsilon.$$
Thus $(X,\mathcal{LSS})$ has property A.
\end{proof}

\end{document}